\newtheorem{thm}{Theorem}[section]
\newtheorem{lemma}[thm]{Lemma}
\newtheorem{prop}[thm]{Proposition}
\newtheorem{cor}[thm]{Corollary}
\newtheorem{defn}[thm]{Definition}
\newtheorem{rmk}[thm]{Remark}
\newcommand{\mR}{\mathbb{R}}
\newcommand{\XX}{\mathcal{X}}
\newcommand{\dist}{\operatorname{dist}}
\begin{document}
\title{Almost sure convergence of maxima for chaotic dynamical systems.}
\author{M.P. Holland, M. Nicol, A. T\"or\"ok}
\date{\today}

\maketitle

\begin{abstract}
Suppose $(f,\XX,\nu)$ is a measure preserving dynamical system and  $\phi:\XX\to\mathbb{R}$ is an observable with some degree of regularity.
We investigate  the maximum process $M_n:=\max\{X_1,\ldots,X_n\}$, where $X_i=\phi\circ f^i$ is a time series of observations  on  the system.
When $M_n\to\infty$ almost surely, we establish results on the almost sure growth rate, namely the existence (or otherwise) of a sequence $u_n\to\infty$ such that $M_n/u_n\to 1$
almost surely.  The observables we consider will be functions of the distance to a distinguished point $\tilde{x}\in \XX$. Our results are based on the interplay between shrinking target problem estimates at $\tilde{x}$ and the form of the observable (in particular polynomial or logarithmic) near $\tilde{x}$. We determine where such an almost sure limit exists and give examples where it does not.  
Our results apply to a wide class of non-uniformly hyperbolic dynamical systems, 
under mild assumptions on the rate of mixing, and on regularity of the invariant measure. 
 \end{abstract}

\section{Introduction}\label{sec.introduction} 
Let $(f,\XX, \nu)$ be a dynamical system, where $\XX\subset\mathbb{R}$, $f:\XX\rightarrow \XX$ is a measurable transformation, and $\nu$ is an $f$-invariant probability measure supported on $\XX$. Given an observable
$\phi:\XX \rightarrow \mR$, i.e. a measurable function,  we consider the stationary stochastic process
$X_1, X_2, \dots$ defined as
\begin{equation}\label{eq:phi_o_f} 
X_i =\phi \circ f^{i-1}, \quad i \geq 1,  
\end{equation}
and its associated maximum process $M_n$ defined as
\begin{equation}
\label{eq:max-process} 
M_n = \max(X_1,\dots,X_n). 
\end{equation}  
 
Recent research has investigated the distributional behavior of $M_n$
and in particular the existence of
sequences $a_n,b_n\in\mathbb{R}$ such that
\begin{equation}\label{eq:extreme-conv1}
\nu\left\{x\in\XX: a_n(M_n-b_n) \leq u \right\}\to G(u),
\end{equation}
for some non-degenerate distribution function  $G(u)$, $-\infty <u <\infty$. These results have shown that for sufficiently hyperbolic systems and for regular enough observables $\phi$ maximized at generic  points $\tilde{x}$, the distribution limit is the 
same as that  which would 
hold if $\{X_i\}$ were  independent identically distributed (i.i.d.) random
variables with the same distribution function as $\phi$~\cite{FFT1,GHN,HNT}. As in the classical situation $G(u)$ can be one of the three extreme value distributions Type I (Gumbell), Type II (Frech\'et) and Type III (Weibull). If $\tilde{x}$ is periodic we expect different behavior (for details see~\cite{CFFHN,FFT2,Ferguson_Pollicott, Keller}). 

In~\cite[Lemma 1.1]{GHN} the elementary observation is made:

\begin{prop}\label{thm.log-vs-powers}
  Assume  a function $x\mapsto g(x)$  has a minimum value of zero at a unique point $\tilde{x} \in \XX$.

  The following are equivalent, where $\alpha > 0$:
  \begin{enumerate}
  \item A Type I law for $x\mapsto -\log d(x,\tilde{x})$ with $a_n=1$ and
    $b_n=\log n$;
  \item A Type II law for $x\mapsto d(x,\tilde{x})^{-\alpha}$ with $a_n=
    n^{-\alpha}$ and $b_n=0$;
  \item A Type III law for $x\mapsto {C}- d(x,\tilde{x})^{\alpha}$ with
    $a_n= n^{\alpha}$ and $b_n={C}$;
  \end{enumerate}
  (and similarly for other choices of $b_n$ in the first case).

\end{prop}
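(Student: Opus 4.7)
The plan is to reduce all three statements to a single underlying distributional statement about the minimum-distance process $D_n := \min_{1 \le i \le n} d(f^{i-1}x, \tilde{x})$ and then to invoke the continuous mapping theorem in both directions.

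First I would use the fact that each of the three observables in the proposition is a strictly monotone function of the scalar $d(f^{i-1}x,\tilde{x})$, so the maxima over $1 \le i \le n$ are all realized at the same (random) index and evaluate to $M_n^{(1)} = -\log D_n$, $M_n^{(2)} = D_n^{-\alpha}$, $M_n^{(3)} = C - D_n^{\alpha}$. Setting $W_n := n D_n$, the stated normalizations turn each normalized maximum into a strictly monotone continuous function of $W_n$:
\[
M_n^{(1)} - \log n = -\log W_n, \qquad n^{-\alpha} M_n^{(2)} = W_n^{-\alpha}, \qquad n^{\alpha}(M_n^{(3)} - C) = -W_n^{\alpha}.
\]
Since $w \mapsto -\log w$, $w \mapsto w^{-\alpha}$ and $w \mapsto -w^{\alpha}$ are homeomorphisms from $(0,\infty)$ onto their ranges, the continuous mapping theorem applied with both each map and its inverse shows that distributional convergence of any one of these normalized maxima is equivalent to distributional convergence of $W_n$ itself.

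What remains is to verify that the three candidate limit laws pull back to the same distribution for $W_n$, namely the unit exponential. A Type I CDF $\exp(-e^{-u})$ in (1) is equivalent to $W_n \to W$ with $P(W > w) = e^{-w}$; pushing $W$ forward through $w \mapsto w^{-\alpha}$ gives the Type II CDF $\exp(-u^{-1/\alpha})$ on $u > 0$, and through $w \mapsto -w^{\alpha}$ gives the Type III CDF $\exp(-(-u)^{1/\alpha})$ on $u < 0$; the converses are these same computations run backwards. The parenthetical remark about other choices of $b_n$ in (1) follows because each extreme value family is closed under affine rescaling of its normalizing constants. I do not foresee a real obstacle: the only mildly delicate step is the CDF bookkeeping in the last paragraph, and the entire content of the proposition is that $\log n$ in case (1) is exactly the translation that matches the multiplicative scalings $n^{\pm\alpha}$ after exponentiation, with $\alpha$ emerging as the shape index of the resulting Fr\'echet and Weibull laws.
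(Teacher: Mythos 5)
Your proposal is correct and complete: reducing all three statements to the convergence of $W_n = nD_n$ to a unit exponential via the continuous mapping theorem (applied with each monotone transformation and its inverse, all of which are homeomorphisms whose inverses are continuous on the support of the respective limit laws) is exactly the standard argument behind this equivalence. The paper itself offers no proof, merely citing \cite{GHN} and calling the statement an elementary observation, and your write-up supplies precisely the bookkeeping that citation presupposes, including the correct identification of the shape index $1/\alpha$ in the Fr\'echet and Weibull limits.
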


This paper will consider the almost sure behavior of $M_n$.  A fundamental problem is to determine the existence (or otherwise) of a sequence $u_n\to\infty$ such that $M_n/u_n\to 1$
(almost surely) so as to determine almost sure rates of growth of $M_n$ (perhaps with error bounds). Motivated by Proposition~\ref{thm.log-vs-powers}  and other applications we will consider the functions
$-\log d(x,\tilde{x})$ and $d(x,\tilde{x})^{-\alpha}$. 
If $\nu$ is ergodic and $\phi$ is essentially bounded then almost surely, $M_n\to \mbox {ess} \sup \phi$, hence the limit of $M_n$ in the case of a bounded observable (Type III) is clear. 
 
For independent, identically distributed (i.i.d) random variables the almost sure behavior of $M_n$  has been widely studied, e.g. in the subject area of \emph{extreme value theory} \cite{Embrechts,Galambos}. However within a dynamical systems framework, and also for general dependent random variables less is known about almost sure growth rates of $M_n$. In this article we determine the existence (or otherwise) of sequences $u_n\to\infty$ such that $M_n/u_n\to 1$ for the functions $-\log d(x,\tilde{x})$ and $d(x,\tilde{x})^{-\alpha}$. 

In the  case where $\phi(x)=-\log\dist(x,\tilde{x})$ for given $\tilde{x}\in\XX$, we show for a broad class of chaotic systems  that for
$\nu$-a.e. $\tilde{x}\in\XX$ 
$$\lim_{n\to\infty}\frac{M_n(x)}{\log n}=\frac{1}{d_{\nu}},\quad\textrm{a.s.},$$
where $d_{\nu}$ is the local dimension of the measure $\nu$.
Towards proving almost sure convergence of $M_n/u_n$ we will establish Borel-Cantelli results for non-uniformly hyperbolic systems and extend results of~\cite{HNPV}. We will also consider observables of the form $\phi(x)=\dist(x,\tilde{x})^{-\alpha}$, $\alpha>0$. In this latter case the almost sure limit $M_n/u_n$ does not necessarily exist for any sequence $u_n$, and instead we  give growth rate bounds.

We organise this paper as follows. In Section \ref{sec.results} we  state the main assumptions placed on the dynamical systems. Such assumptions will be phrased in terms of i) the form of the observable $\phi:\XX\to\mathbb{R}$ (which will always be assumed to have a unique maxima at a point $\tilde{x}$); ii) the regularity of the invariant measure $\nu$ and the  iii) the rate of mixing. Our proofs use recent work on dynamical Borel Cantelli lemmas. In Section \ref{sec.general} we establish almost sure convergence of $M_n$ for a wide class of chaotic systems under these
assumptions. In particular we  demonstrate the sensitivity of the convergence to the form of the observable and the regularity of the measure. In later sections we make assumptions on return time statistics
and obtain more refined estimates on the almost sure behavior of $M_n$.
statistics. Finally we compare our findings to what is known in the i.i.d case. This latter work builds upon that of \cite{GNO}.

\section{Statement of results}\label{sec.results}
In this section we consider a measure preserving system $(f,\XX,\nu)$, where $\XX\subset\mathbb{R}^d$, and $\nu$ is an ergodic Sinai-Ruelle-Bowen (SRB) measure. 
We first make  precise our notion of decay of correlations.
\begin{defn}
We say that $(f,\XX,\nu)$ has decay of correlations in $\mathcal{B}_1$ versus $\mathcal{B}_2$ (where $\mathcal{B}_1$ and $\mathcal{B}_2$ are Banach spaces)
with rate function $\Theta(j)\to 0$ if for all $\varphi_1\in\mathcal{B}_1$ and $\varphi_2\in\mathcal{B}_2$ we have:
\[
\mathcal{C}_j(\varphi_1,\varphi_2,\nu):=\left|\int \varphi_1 \cdot \varphi_2\circ f^j d \nu -\int \varphi_1 d \nu \int \varphi_2 d\nu\right|
\leq
\Theta(j) \|\varphi_1\|_{\mathcal{B}_1} \|\varphi_2\|_{\mathcal{B}_2},
\]
where $\|\cdot\|_{\mathcal{B}_i}$ denotes the corresponding norms on the Banach spaces.
\end{defn}
Given this definition we state the following assumption: 
\begin{enumerate}
\item[(A1)]{\bf (Decay of correlations).}
There exists a monotonically decreasing sequence $\Theta(j)\to 0$ such that
for all Lipschitz $\varphi_1$ and $\varphi_2$:
\[
\mathcal{C}_j(\varphi_1,\varphi_2,\nu)\leq
\Theta(j) \|\varphi_1\|_{\textrm{Lip}} \|\varphi_2\|_{\textrm{Lip}},
\]
where $\|\cdot\|_{\textrm{Lip}}$ denotes the Lipschitz norm.
\end{enumerate}
For a wide class of non-uniformly hyperbolic systems (including those with stable foliations) it is known that condition
(A1) holds with estimates on $\Theta(j)$. We also consider variants of (A1), such as decay of correlations for \emph{Lipschitz versus $L^{\infty}$}
or decay for \emph{BV versus $L^1$}, where BV is the space of functions of bounded variation. For these latter conditions, we can make stronger statements about almost sure convergence rates for $M_n$, but these conditions are in general  valid only for a more restricted class of dynamical system. 

We now consider the regularity of the measure $\nu$. For non-uniformly hyperbolic systems the measure $\nu$ need not be
absolutely continuous with respect to Lebesgue measure. It's regularity may sometimes be quantified by
local dimension estimates. Recall that the 
pointwise local dimension of $\nu$  at $x$ is given by:
\begin{equation}\label{eq.localdim}
d_{\nu} (x):=\lim_{r\to 0}\frac{\log\nu(B(x,r))}{\log r},
\end{equation}
whenever this limit exists. Here $B(x,r)$ denotes the ball of radius $r$ centered at $x\in\XX$.
For the examples we consider the local dimension of $\nu$  exists  and is the same for $\nu$-a.e. $x\in\XX$.  We will call this value $d_{\nu}$.
For the measure $\nu$, we also need control its regularity on certain
shrinking annuli about a distinguished point $\tilde{x}$.  We say  assumption $(A2)$ holds for $\tilde{x}$ if:
\begin{enumerate}
\item[(A2)]{\bf (Regularity of $\nu$ on shrinking annuli about $\tilde{x}$).} 
There exists $\delta>0$, and $r_0>0$ such that for all $\epsilon<r<r_0$:
\begin{equation}
|\nu(B(\tilde{x},r+\epsilon))-\nu(B(\tilde{x},r))|\leq C\epsilon^{\delta}.
\end{equation}
The constant $C$ depends on $\tilde{x}$ (but not on $\delta$).
\end{enumerate}
In the statement of our results, we consider observable functions of the form 
$\phi(x)=\psi(\dist(x,\tilde{x}))$, with $\psi:\mathbb{R}^+\to\mathbb{R}$. We will assume that $\psi$ is monotonically decreasing and $\lim_{y\to 0}\psi(y)=\infty$. For this observable class, the level sets $\{\phi(x)\geq u\}$, for given $u\in\mathbb{R}$ correspond to balls in the Euclidean metric. To establish almost sure growth estimates of $M_n$, we need to know how $\nu$ scales on these level sets (as $u\to\infty$). In particular equation 
\eqref{eq.localdim}, and assumption (A2) is phrased in terms of how $\nu$ scales on small balls. For the majority of our results, we focus on the explicit cases $\psi(y)=-\log y$, and $\psi(y)=y^{-\alpha}$ (for some $\alpha>0$). However, our approach is quite general and can be adapted to other functional forms for $\psi$.  We have the following result:
\begin{thm}\label{thm.max.log}
Suppose that $(f,\XX,\nu)$ is a measure preserving system with ergodic SRB measure
$\nu$. Suppose that the local dimension $d_{\nu}$ is defined at $\tilde{x}$ and (A2) holds for $\tilde{x}$. 
Let $\phi(x)=-\log(\dist(x,\tilde{x}))$. If:
\begin{enumerate}
\item 
(A1) holds and $\Theta(n)=O(\theta^{n}_0)$ for some $\theta_0<1$, 
then
\begin{equation}\label{eq.max.exp}
\lim_{n\to\infty}\frac{M_n(x)}{\log n}=\frac{1}{d_{\nu}},\quad\textrm{a.s.}
\end{equation}
\item   (A1) holds and for $\zeta>0$, $\Theta(n)=O(n^{-\zeta})$
then there exists $\beta\in(0,1)$ such that:
\begin{equation}\label{eq.max.poly}
\frac{\beta}{d_{\nu}}\leq\liminf_{n\to\infty}\frac{M_n(x)}{\log n},\;\limsup_{n\to\infty}\frac{M_n(x)}{\log n}\leq
\frac{1}{d_{\nu}},\quad\textrm{a.s.}
\end{equation}
\end{enumerate}
\end{thm}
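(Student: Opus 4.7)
The plan is to recast the theorem as a shrinking-target problem. Since $\phi$ is monotone decreasing in $\dist(\cdot,\tilde x)$,
\[
M_n(x)\ge u \;\Longleftrightarrow\; \min_{0\le j<n}\dist(f^jx,\tilde x)\le e^{-u},
\]
so everything reduces to controlling, for suitable radii $r=r_n\downarrow 0$, the first time the orbit of $x$ enters the ball $B_r:=B(\tilde x,r)$. Existence of $d_\nu$ at $\tilde x$ gives, for every $\eta>0$, the two-sided bound $r^{d_\nu+\eta}\le\nu(B_r)\le r^{d_\nu-\eta}$ for all sufficiently small $r$, which I use freely.

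\emph{Upper bound (both cases).} Fix $\epsilon>0$ and set $r_n=n^{-(1+\epsilon)/d_\nu}$. A union bound gives
\[
\nu\bigl\{M_n\ge \tfrac{1+\epsilon}{d_\nu}\log n\bigr\}\le n\,\nu(B_{r_n})\le Cn^{-\epsilon/2},
\]
which is summable along the geometric subsequence $n_k=2^k$. The easy half of Borel--Cantelli plus monotonicity of $n\mapsto M_n$ then yield $\limsup_n M_n/\log n\le 1/d_\nu$ a.s.; neither (A1) nor (A2) is needed for this half.

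\emph{Lower bound under exponential decay.} Fix $\epsilon>0$ and set $r_n=n^{-(1-\epsilon)/d_\nu}$, so that the counting function $S_n(x):=\sum_{0\le j<n}\mathbf{1}_{B_{r_n}}(f^jx)$ has mean $n\nu(B_{r_n})$ growing like $n^{\epsilon}$. I plan a second-moment argument. Using (A2), approximate $\mathbf{1}_{B_{r_n}}$ by a Lipschitz mollifier $\varphi_n$ at scale $\epsilon_n$ with
\[
\|\mathbf{1}_{B_{r_n}}-\varphi_n\|_{L^1(\nu)}\le C\epsilon_n^{\delta},\qquad \|\varphi_n\|_{\textrm{Lip}}\le \epsilon_n^{-1}.
\]
Expanding $\operatorname{Var}(S_n)$ into $n^2$ covariances, replacing the indicators by $\varphi_n$, and invoking (A1) with $\Theta(j)=O(\theta_0^j)$, each cross term is $O(\theta_0^{|i-j|}\epsilon_n^{-2})$ modulo an $L^1$-error of order $\epsilon_n^\delta$; the geometric sum in $|i-j|$ then gives $\operatorname{Var}(S_n)\le C\bigl(n\,\nu(B_{r_n})+n\,\epsilon_n^{-2}+n^2\epsilon_n^{\delta}\bigr)$. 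Choosing $\epsilon_n$ as a suitable small power of $r_n$ makes this $o\bigl((\mathbb ES_n)^2\bigr)$, so Chebyshev yields $\nu(S_n=0)=O(n^{-\epsilon})$, summable along $n_k=2^k$. Borel--Cantelli and monotonicity of $M_n$ then give $\liminf_n M_n/\log n\ge (1-\epsilon)/d_\nu$ a.s.; sending $\epsilon\downarrow 0$ completes part~(i).

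\emph{Polynomial decay and main obstacle.} For (ii) the same scheme applies, but the geometric sum $\sum_k\theta_0^k$ is now replaced by $\sum_{k\le n}k^{-\zeta}\le Cn^{\max(0,1-\zeta)}$, which is far weaker. The principal difficulty, and the step I expect to be the most delicate, is the three-way balance among (a) the target radius $r_n=n^{-\beta/d_\nu}$ (small to push $M_n$ up, large to keep $n\,\nu(B_{r_n})\to\infty$), (b) the mollification scale $\epsilon_n$ (small so the $L^1$ error $\epsilon_n^\delta$ is negligible against $\nu(B_{r_n})$, large so the covariance penalty $\epsilon_n^{-2}$ is manageable against the weak decay), and (c) the decay exponent $\zeta$ and annular regularity exponent $\delta$. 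Optimising this balance produces an explicit $\beta=\beta(\zeta,\delta)\in(0,1)$ for which the variance bound still dominates, delivering $\liminf_n M_n/\log n\ge\beta/d_\nu$ a.s. and hence~\eqref{eq.max.poly}.
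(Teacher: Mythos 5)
Your overall strategy --- reduce to a shrinking-target problem, get the upper bound from the first Borel--Cantelli lemma plus local dimension, and get the lower bound from a second-moment estimate for the hit-counting function along the subsequence $n_k=2^k$ --- is sound, and the upper-bound half is correct. However, the variance estimate in your lower bound does not close as written. With $r_n=n^{-(1-\epsilon)/d_\nu}$ you have $\mathbb{E}S_n=n\,\nu(B_{r_n})\asymp n^{\epsilon}$, so $(\mathbb{E}S_n)^2\asymp n^{2\epsilon}$; but your bound $\operatorname{Var}(S_n)\le C\bigl(n\,\nu(B_{r_n})+n\,\epsilon_n^{-2}+n^2\epsilon_n^{\delta}\bigr)$ contains the term $n\,\epsilon_n^{-2}\ge n$, which dominates $n^{2\epsilon}$ for every $\epsilon<1/2$ no matter how the mollification scale $\epsilon_n\in(0,1)$ is chosen. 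So the claim that a suitable small power of $r_n$ makes the variance $o\bigl((\mathbb{E}S_n)^2\bigr)$ is false for the bound you state, and Chebyshev then gives nothing.

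The missing ingredient is a near/far splitting of the covariance sum. For pairs with $|i-j|\le\Delta$ you must use the trivial bound $|\operatorname{Cov}(\mathbf{1}_{B}\circ f^i,\mathbf{1}_{B}\circ f^j)|\le\nu(B_{r_n})$, contributing $O(n\Delta\,\nu(B_{r_n}))$; only for $|i-j|>\Delta$ should you invoke (A1) on the mollifiers, contributing $O\bigl(n\,\epsilon_n^{-2}\sum_{k>\Delta}\Theta(k)\bigr)$. In the exponential case take $\Delta=A\log n$ with $A$ large: since $\epsilon_n^{-1}$ is only polynomial in $n$, the far term is $O(1)$, and then $\operatorname{Var}(S_n)=O\bigl(n\,\nu(B_{r_n})\log n+n^2\epsilon_n^{\delta}\bigr)=o\bigl((\mathbb{E}S_n)^2\bigr)$ as you need. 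In the polynomial case take $\Delta=n^{\sigma}$, and the three-way optimisation you describe then genuinely produces a $\beta(\zeta,\delta)\in(0,1)$. This splitting is precisely the $I_i/II_i$ decomposition used in the paper's proof of Proposition \ref{prop.bc2} via Proposition \ref{prop:sprindzuk}; the paper runs it for a nested sequence of balls $B_k$ to get a full strong Borel--Cantelli statement, whereas your block version only needs $\nu(S_{n_k}=0)$ summable along $n_k=2^k$ and is in that sense more elementary, but it is not a proof until the splitting is inserted.
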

\begin{rmk}
For systems with superpolynomial decay of correlations, i.e. where for all $\zeta>0$ we have $\Theta(n)=O(n^{-\zeta})$, then in Case 2 of Theorem \ref{thm.max.log} the constant $\beta$ can be chosen arbitrarily close to 1. In fact, we will derive an explicit lower bound on $\beta$
in terms of $\zeta$ and $\delta$. 
\end{rmk}
The conclusions above depend essentially on having decay of correlations and a sufficiently regular ergodic measure. The recurrence statistics associated to the reference point $\tilde{x}$ do not feature in the statements. This is contrary to the case of distributional limits for $M_n$ in the context of extreme value theory. Distributional limits for linear scalings of $M_n$ in the sense of equation \eqref{eq:extreme-conv1}, i.e. the form of the limit law $G(u)$, depend on the recurrence properties of $\tilde{x}$ (e.g. periodic versus non-periodic), 
see for example \cite{HNT, GHN, FFT2, Keller, Ferguson_Pollicott}. In Section 
\ref{sec.srt} we will discuss refined estimates on the almost sure bounds of $M_n$, and these bounds will use information on the recurrence statistics associated to $\tilde{x}$, and the distributional limit behavior of $M_n$. 
If $\nu$ is absolutely continuous with respect to Lebesgue measure on $\XX$ we obtain more refined estimates.
To state the following results we use the notation $\{A_n,\mathrm{ev}\}$, ($A_n$ occurs \emph{eventually}) 
to denote the event $\bigcup_{n\geq 0}\bigcap_{k\geq n}A_k$,
when given a sequence of events $A_n\subset\XX$. Similarly we 
denote the event $\bigcap_{n\geq 0}\bigcup_{k\geq n}A_k$ by $\{A_n,\mathrm{i.o.}\}$, if $A_n$ occurs \emph{infinitely often}. We have the following result
\begin{thm}\label{thm.max.abs.cts}
Suppose that $(f,\XX,\nu)$ is measure preserving, where
$\nu$ is ergodic and absolutely continuous with respect to Lebesgue measure, and (A2) holds for $\tilde{x}$.
Let  $\phi(x)=-\log(\dist(x,\tilde{x}))$. If:
\begin{enumerate}
\item 
(A1) holds and $\Theta(n)=O(\theta^{n}_0)$ for some $\theta_0<1$, then 
there exists $\beta>0$ such that for all $\eta>1$:
\begin{equation}\label{eq.max.ev.exp}
\nu\left(x\in\XX:d^{-1}_{\nu}(\log n+\eta\log\log n)\geq M_n(x)\geq 
d^{-1}_{\nu}(\log n-\beta\log\log n),\mathrm{ev}\right)=1.
\end{equation}
\item  (A1) and $\Theta(n)=O(n^{-\zeta})$ for $\zeta>2\delta^{-1}$, then there exists $\beta>0$ such that for all 
$\eta>1$:
\begin{equation}\label{eq.max.ev.poly}
\nu\left(x\in\XX:d^{-1}_{\nu}(\log n +\eta\log\log n) \geq M_n(x)\geq 
d^{-1}_{\nu}\beta\log n,\mathrm{ev}\right)=1.
\end{equation}
\end{enumerate}
\end{thm}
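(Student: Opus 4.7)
The plan is to convert both bounds into hitting-time statements for shrinking balls about $\tilde{x}$, via the identity $\{M_n \geq u\} = \bigcup_{j=1}^n f^{-(j-1)}B(\tilde{x}, e^{-u})$. Absolute continuity of $\nu$, together with (A2), yields $c_1 r^{d_\nu} \leq \nu(B(\tilde{x},r)) \leq c_2 r^{d_\nu}$ for small $r$, which is the basic scaling I will feed into the Borel--Cantelli machinery.

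For the upper bounds (identical in cases 1 and 2) fix $\eta > 1$ and set $u_n = d_\nu^{-1}(\log n + \eta\log\log n)$, so that $r_n := e^{-u_n}$ satisfies $\nu(B(\tilde x, r_n)) \leq C n^{-1}(\log n)^{-\eta}$. Along the dyadic subsequence $n_k = 2^k$, a union bound gives
\[
\nu(M_{n_k} > u_{n_k}) \leq n_k\, \nu(B(\tilde x, r_{n_k})) = O(k^{-\eta}),
\]
which is summable, and the classical (first) Borel--Cantelli lemma --- no mixing needed --- yields $M_{n_k} \leq u_{n_k}$ eventually. For $n \in [n_k, n_{k+1}]$, monotonicity of $M_n$ together with $u_{n_{k+1}} - u_n = O(1)$, absorbed into a slightly larger $\log\log n$ coefficient, gives the bound for arbitrary $n$.

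For the lower bounds, set $v_n = d_\nu^{-1}(\log n - \beta\log\log n)$ in case 1 and $v_n = d_\nu^{-1}\beta\log n$ in case 2, with $\beta > 0$ to be chosen, and let $A_j = f^{-(j-1)}B(\tilde x, e^{-v_n})$. I approximate $\mathbf{1}_{B(\tilde x, r)}$ by a Lipschitz function of norm $1/\epsilon$ and $L^1$-error $C\epsilon^\delta$ (using (A2)); combining with (A1) and optimising $\epsilon$ gives the pairwise estimate
\[
\bigl|\nu(A_i \cap A_j) - \nu(A_i)\nu(A_j)\bigr| \leq C\,\Theta(|j-i|)^{\delta/(2+\delta)}.
\]
This feeds into the quantitative dynamical Borel--Cantelli argument developed earlier in the paper (extending \cite{HNPV}): in case 1 the exponential decay of $\Theta$ makes covariances negligible and delivers the strong form with the tight log--log correction; in case 2 the assumption $\zeta > 2\delta^{-1}$ is precisely what is needed to make the resulting correlation error summable, and the arithmetic forces $\nu(A_j)$ to decay only polynomially in $j$, so only the weaker lower bound $\beta d_\nu^{-1}\log n$ survives.

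The main obstacle is the lower bound in case 1. Turning the pairwise correlation estimate into a quantitative strong Borel--Cantelli lemma for \emph{time-varying} radii $e^{-v_n}$, sharp enough to identify the log--log correction $\beta\log\log n$, requires carefully balancing $\epsilon$ against $\Theta$ and summing the resulting variance bounds over sub-blocks of $[1,n]$; all other steps are comparatively routine given (A2) and monotonicity of $M_n$.
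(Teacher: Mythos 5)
Your overall strategy (first Borel--Cantelli for the upper bound; a quantitative strong Borel--Cantelli lemma built from Lipschitz approximation plus (A1) for the lower bound) is the same as the paper's, which simply invokes Propositions \ref{prop.mn.obs} and \ref{prop.mn.poly}, themselves resting on Proposition \ref{prop.bc2}. Your upper-bound argument (dyadic blocking, union bound, monotonicity of $M_n$, absorb the $O(1)$ into a slightly larger $\eta$) is correct and essentially equivalent to the paper's use of BC1 on the events $\{f^n x\in B_n\}$ with $\nu(B_n)=n^{-1}(\log n)^{-\eta}$.

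The lower bound, however, has a genuine gap, and it sits exactly where you locate the difficulty. The pairwise estimate you propose, $\abs{\nu(A_i\cap A_j)-\nu(A_i)\nu(A_j)}\leq C\,\Theta(|j-i|)^{\delta/(2+\delta)}$, obtained by optimising the Lipschitz scale $\epsilon$ per pair, has lost all dependence on the ball measures. Summing it over $i<j\leq n$ gives $\mathrm{Var}(S_n)\leq E_n+O(n)$ (even with exponential $\Theta$), whereas the relevant events have $\nu(A_j)\asymp j^{-1}(\log j)^{\beta}$ and hence $E_n\asymp(\log n)^{1+\beta}$. A variance bound of order $n$ against $E_n^2=O((\log n)^{2+2\beta})$ is useless for any Chebyshev, Kochen--Stone or Sprindzuk-type argument, no matter how you block $[1,n]$. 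The paper avoids this by \emph{not} optimising per pair: it fixes the approximation scale $(k(\log k)^2)^{-1/\delta}$ per index $k$, so the off-diagonal terms are bounded by $\|\tilde f_i\|_{\mathrm{Lip}}\|\tilde f_j\|_{\mathrm{Lip}}\Theta(j-i)$ --- summable to a constant when $\Theta$ decays exponentially --- while the near-diagonal terms ($|j-i|\leq(\log i)^{\sigma}$) retain the factor $\nu(B_j)+ (j\log^2 j)^{-1}$ and contribute only $O(E_n^{1/\eta})$ with $1/\eta<2$. That is what makes $S_n=E_n+O(E_n^{\beta'})$, $\beta'<1$, attainable. A second, related issue: for the lower bound you take all $A_j$, $j\leq n$, to be preimages of the single ball $B(\tilde x,e^{-v_n})$, which is a triangular array and not covered by a strong Borel--Cantelli statement for a fixed sequence of shrinking targets. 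The paper's device is to run SBC for the genuinely shrinking sequence $B_k=B(\tilde x,r_k)$ with $\nu(B_k)=k^{-1}(\log k)^{\beta}$, set $n_l(x)=\max\{k\leq n: f^k x\in B_k\}$, deduce $n_l\sim n$ from $E_n-E_{n_l}=O(E_n^{\beta'})$, and conclude $M_n\geq\psi(r_{n_l})\geq\log n-(\beta+\epsilon)\log\log n$ (whence $\beta=3$ works in Case 1). Without either this device or a corrected variance estimate, your lower bounds in both cases are not established.
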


In Case 2, a lower bound for the constant $\beta$ can be made explicit in terms of $\delta$ and $\zeta$.
The proof of this theorem requires the determination of (optimal) sequences $u_n$ and $v_n$ so that 
$\nu\{v_n \leq M_n\leq u_n,\,\mathrm{ev}\}=1$. From equations \eqref{eq.max.ev.exp} and \eqref{eq.max.ev.poly},
the respective almost sure convergence results of Cases 1 and 2 in Theorem \ref{thm.max.log} apply. To obtain such results
the relative sizes of the sequences $u_n$ and $v_n$ need to be determined. Indeed, if we cannot obtain sequences satisfying 
$u_n\sim v_n$, then we only obtain almost sure growth bounds on $M_n$.  The 
sequences depend on the form of the observable $\phi(x)$. 
\begin{thm}\label{thm.powerdist}
Suppose that $(f,\XX,\nu)$ is a measure preserving system with ergodic SRB measure
$\nu$. Suppose that the local dimension $d_{\nu}$ is defined at $\tilde{x}$ and (A2) holds at $\tilde{x}$. 
Let $\phi(x)=\dist(x,\tilde{x})^{-\alpha}$ for some $\alpha>0$. 
If :
\begin{enumerate}
\item (A1) holds and $\Theta(n)=O(\theta^{n}_0)$ for some $\theta_0<1$, 
then for all $n>0$, and all $\epsilon>0$:
\begin{equation}\label{eq.max.srb.exp}
\nu\{x\in\XX:\, n^{\frac{\alpha}{d_{\nu}}-\epsilon}\leq M_n(x)\leq n^{\frac{\alpha}{d_{\nu}}+\epsilon},\mathrm{ev}\}=1.
\end{equation}
\item  (A1) holds and $\Theta(n)=O(n^{-\zeta})$ for $\zeta>2\delta^{-1}$, then there exists $\beta>0$ such that
for all $\epsilon>0$:
\begin{equation}\label{eq.max.srb.poly}
\nu\{x\in\XX:\, n^{\frac{\beta\alpha}{d_{\nu}}-\epsilon}\leq M_n(x)\leq n^{\frac{\alpha}{d_{\nu}}+\epsilon},\mathrm{ev}\}=1.
\end{equation}
\end{enumerate}
\end{thm}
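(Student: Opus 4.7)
My plan is to reduce Theorem~\ref{thm.powerdist} directly to Theorem~\ref{thm.max.log} via the monotone relationship between the two observables. Write $\tilde\phi(x) := -\log\dist(x,\tilde{x})$ and $\phi(x) := \dist(x,\tilde{x})^{-\alpha}$, and let $\tilde M_n$ and $M_n$ denote the corresponding maximum processes. Since $\phi = \exp(\alpha\tilde\phi)$ as functions on $\XX$ and $\exp$ is strictly increasing, one has the pointwise identity $M_n(x) = \exp(\alpha\tilde M_n(x))$. The hypotheses (A1), (A2), and the existence of the local dimension $d_\nu$ at $\tilde{x}$ are assumed in Theorem~\ref{thm.powerdist} and are precisely what Theorem~\ref{thm.max.log} requires of $\tilde\phi$, so I can apply it.

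For Case~(1), Theorem~\ref{thm.max.log}(1) gives $\tilde M_n(x)/\log n \to 1/d_\nu$ almost surely. Fix $\epsilon>0$ and set $\epsilon' := \epsilon/\alpha$; almost surely, for all $n$ sufficiently large,
\[
  \tfrac{\log n}{d_\nu} - \epsilon'\log n \;<\; \tilde M_n(x) \;<\; \tfrac{\log n}{d_\nu} + \epsilon'\log n,
\]
and exponentiating yields $n^{\alpha/d_\nu-\epsilon} < M_n(x) < n^{\alpha/d_\nu+\epsilon}$ eventually, which is~\eqref{eq.max.srb.exp}. For Case~(2), Theorem~\ref{thm.max.log}(2) supplies $\beta\in(0,1)$ with $\liminf_n \tilde M_n/\log n \geq \beta/d_\nu$ and $\limsup_n \tilde M_n/\log n \leq 1/d_\nu$ almost surely; unwinding each one-sided bound into an ``eventually'' statement and exponentiating as above gives~\eqref{eq.max.srb.poly} with the same $\beta$.

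\textbf{Main obstacle.} Under this reduction there is essentially no new obstacle: the heavy lifting -- the first and second Borel--Cantelli arguments against the shrinking balls $B(\tilde{x},r_n)$, estimated via (A2) and the local dimension, together with the quantitative mixing input (A1) that underpins the shrinking-target lower bound -- has already been done in proving Theorem~\ref{thm.max.log}. The only point requiring a line of justification is that the conclusions ``eventually'' in~\eqref{eq.max.srb.exp} and~\eqref{eq.max.srb.poly} follow from the limit/liminf/limsup statements in Theorem~\ref{thm.max.log}, which is immediate from the definitions. I note that the condition $\zeta > 2\delta^{-1}$ in Case~(2) is consistent with (indeed stronger than) what Theorem~\ref{thm.max.log}(2) demands; it ensures that the $\beta$ supplied there is a concrete positive constant with an effective lower bound in terms of $\zeta$ and $\delta$, matching the remark after Theorem~\ref{thm.max.log}.

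If one preferred a self-contained proof without invoking Theorem~\ref{thm.max.log}, the strategy would be parallel: the upper bound in each case follows by the first Borel--Cantelli lemma applied along a dyadic subsequence $n_k = 2^k$ to the sets $\{M_{n_k} > n_k^{\alpha/d_\nu+\epsilon}\} \subset \bigcup_{i\le n_k} f^{-(i-1)}B(\tilde{x},n_k^{-1/d_\nu-\epsilon/\alpha})$, whose $\nu$-measures are bounded by $n_k \cdot \nu(B(\tilde{x},n_k^{-1/d_\nu-\epsilon/\alpha}))$ and are summable by (A2) and the local dimension estimate, with monotonicity of $M_n$ filling in between dyadic blocks. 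The lower bound requires a dynamical (shrinking-target) Borel--Cantelli statement on $\tilde{x}$, and it is precisely here that the mixing rate enters: exponential decay yields the sharp constant $1/d_\nu$, while polynomial decay with $\zeta > 2\delta^{-1}$ allows the standard second-moment method to go through and produces the constant $\beta$.
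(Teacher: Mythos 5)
Your proof is correct, but it takes a different (and shorter) route than the paper. The paper's own proof of Theorem~\ref{thm.powerdist} consists of re-running the method of Theorem~\ref{thm.max.log} with $\psi(y)=y^{-\alpha}$ in place of $\psi(y)=-\log y$: that is, one again chooses balls with $\nu(B_n)=1/(n(\log n)^{\delta})$ for the first Borel--Cantelli upper bound and $\nu(B_n)=(\log n)^{\beta}/n$ (resp.\ $n^{-\beta}$) for the lower bound via Propositions~\ref{prop.bc2} and~\ref{prop.Mn.bounds}/\ref{prop.mn.poly}, and then converts radii to values of $\psi$ using the local dimension sandwich $r_n\in[\nu(B_n)^{1/d_\nu+\epsilon},\nu(B_n)^{1/d_\nu-\epsilon}]$. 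You instead observe that $\phi=\exp(\alpha\tilde\phi)$ pointwise, hence $M_n=\exp(\alpha\tilde M_n)$, and transport the already-proved limit/liminf/limsup statements of Theorem~\ref{thm.max.log} through the increasing map $t\mapsto e^{\alpha t}$; this is a legitimate deduction (the hypotheses of Theorem~\ref{thm.powerdist} imply those of Theorem~\ref{thm.max.log}, and in Case~2 the extra assumption $\zeta>2\delta^{-1}$ guarantees the $\beta$ of Theorem~\ref{thm.max.log} is a genuine positive constant), and it is exactly the almost-sure analogue of the monotone-transfer observation in Proposition~\ref{thm.log-vs-powers}. What your shortcut buys is economy: no new Borel--Cantelli work at all. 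What it cannot recover is anything finer than the $n^{\pm\epsilon}$ slack in the exponent -- the logarithmic corrections of Remark~\ref{rmk.thmpowerdist} and Proposition~\ref{prop.mn.obs}(2) (e.g.\ $n^{\alpha/d}(\log n)^{-3\alpha/d}\le M_n\le n^{\alpha/d}(\log n)^{\alpha\delta/d}$ in the absolutely continuous case) are invisible at the $\log n$ scale of Theorem~\ref{thm.max.log} and genuinely require re-running the argument with the new $\psi$, which is what the paper does and what your self-contained fallback sketch correctly describes.
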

\begin{rmk}\label{rmk.thmpowerdist}
In the case of an absolutely continuous invariant measure $\nu$ we can further refine the estimates given in Theorem 
\ref{thm.powerdist}. See Propositions \ref{prop.mn.obs} and \ref{prop.mn.poly}.
\end{rmk}
In the case of the observable $\phi(x)=\dist(x,\tilde{x})^{-\alpha}$, the theorem above gives only almost sure bounds on 
the growth of $M_n$. Hence, we are led to the question on existence of an almost sure growth rate. The following result
shows that for certain observables there is no almost sure limit.
\begin{thm}\label{thm.nolimit}
Suppose that $(f,\XX,\nu)$ is a measure preserving system with ergodic measure
$\nu$ which is absolutely continuous with respect to Lebesgue measure, and (A2) holds at $\tilde{x}$.
Moreover suppose that we have decay of correlations in BV versus $L^1$ with rate function
$\Theta(j)$ satisfying $\sum_j\Theta(j)<\infty$. Consider the observable
 $\phi(x)=\dist(x,\tilde{x})^{-\alpha}$ for some $\alpha>0$. Then we have for any monotone sequence $u_n\to\infty$:
\begin{equation}\label{eq.mn.nolimit.thm}
\nu\left(\limsup_{n\to\infty}\frac{M_n(x)}{u_n}=0\right)=1,\;\textrm{or}\;\nu\left(
\limsup_{n\to\infty}\frac{M_n(x)}{u_n}=\infty\right)=1.
\end{equation}
\end{thm}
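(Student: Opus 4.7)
The plan is to establish a Borel--Cantelli dichotomy for the level-set events at each threshold $c>0$, and then to note that under absolute continuity plus (A2) the convergence status of the associated series is the same for every $c>0$, forcing $\limsup_n M_n/u_n$ to be either $0$ or $\infty$ almost surely. Without loss of generality $u_n$ is monotone increasing to infinity. For $c>0$, set $r_n(c) := (c u_n)^{-1/\alpha}$ and
\[
A_n(c) := \set{x\in\XX : f^{n-1}(x) \in B(\tilde{x}, r_n(c))},
\]
so that $\set{X_n \ge c u_n} = A_n(c)$ and $\nu(A_n(c)) = \nu(B(\tilde{x}, r_n(c)))$ by $f$-invariance.

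Suppose first that $\sum_n \nu(A_n(c)) < \infty$. The classical Borel--Cantelli lemma then gives $\nu$-almost surely a random $N=N(x)$ with $X_n < c u_n$ for all $n > N$; monotonicity of $u_n$ yields $M_n \le \max(M_N,\, c u_n)$, and since $u_n \to \infty$ we conclude $\limsup_n M_n/u_n \le c$ a.s. Suppose instead $\sum_n \nu(A_n(c)) = \infty$. Here I would apply a dynamical Borel--Cantelli lemma for shrinking balls targeted at $\tilde{x}$: the summable BV versus $L^1$ decay hypothesis is precisely what enables a Chung--Erd\H{o}s type second-moment argument (cf.\ \cite{HNPV}), giving $\nu(\set{A_n(c),\,\mathrm{i.o.}}) = 1$. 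Since $M_n \ge X_n$ whenever $A_n(c)$ holds, this forces $\limsup_n M_n/u_n \ge c$ a.s.

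It remains to observe that the convergence of $\sum_n \nu(A_n(c))$ does not depend on $c$. Assumption (A2) together with absolute continuity provides two-sided bounds $K_1 r^{d_\nu} \le \nu(B(\tilde{x}, r)) \le K_2 r^{d_\nu}$ for $r$ sufficiently small (with $d_\nu$ the local dimension at $\tilde{x}$, which equals the ambient dimension of $\XX$ in the absolutely continuous case), so $\nu(A_n(c))$ is comparable to $c^{-d_\nu/\alpha} u_n^{-d_\nu/\alpha}$ up to multiplicative constants independent of $n$. Consequently $\sum_n \nu(A_n(c))$ and $\sum_n u_n^{-d_\nu/\alpha}$ have the same convergence behaviour. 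In the convergent case we obtain $\limsup_n M_n/u_n \le c$ for every $c>0$ and therefore $\limsup_n M_n/u_n = 0$ a.s.; in the divergent case we obtain $\limsup_n M_n/u_n \ge c$ for every $c>0$ and therefore $\limsup_n M_n/u_n = \infty$ a.s. I expect the main technical obstacle to be the dynamical Borel--Cantelli step, in which one must verify that summable BV versus $L^1$ decay of correlations suffices to conclude $\set{A_n(c),\,\mathrm{i.o.}}$ for shrinking balls at the specific $\tilde{x}$; the remainder of the argument is essentially bookkeeping, given (A2) and the resulting scaling of $\nu$ on small balls.
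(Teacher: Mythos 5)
Your proposal is correct and follows essentially the same route as the paper: a Borel--Cantelli dichotomy at each threshold (first Borel--Cantelli in the convergent case, the dynamical Borel--Cantelli lemma of \cite{HNPV} under summable BV versus $L^1$ decay in the divergent case), combined with the observation that rescaling the radii $r_n$ by a constant does not change the summability of $\sum_n\nu(B(\tilde{x},r_n))$ when $\nu$ is absolutely continuous. The only cosmetic differences are that you phrase the scale-invariance via a two-sided power bound on $\nu(B(\tilde{x},r))$ rather than the paper's $\nu(B(\tilde{x},tr))\sim t\,\nu(B(\tilde{x},r))$ (both implicitly use that the density is positive and finite at $\tilde{x}$), and you bypass the paper's appeal to Galambos by using only the trivial inclusion $\{X_n\geq u_n\}\subset\{M_n\geq u_n\}$, which suffices.
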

We remark that decay of correlations of $BV$ against $L^1$ is a strong condition. It is known to hold for certain 
expanding maps with exponential decay of correlations, see \cite{HNPV}. However, we show that the conclusion of Theorem \ref{thm.nolimit} is more widely applicable, especially for systems where quantitative recurrence statistics are known to hold for shrinking targets around generic $\tilde{x}\in\XX$. We will discuss this in Section \ref{sec.srt}. 

In Section \ref{sec.applications} we apply the main theorems given above to study the behaviour of $M_n$ for a range of dynamical
systems models of various complexity. At this stage, it is worth examining a simple case study.

\noindent {\bf The Tent Map.}

 The tent map  $f:\XX\to\XX$,  is given by $f(x)=1-|1-2x|$ on $\XX=[0,1]$. For this map  Lebesgue measure $m$ is invariant and ergodic, and condition (A1) holds with exponential decay, in fact for 
 BV versus $L^1(m)$.
 
 In this case $B(x,r)$ is just the interval $[x-r,x+r]$, and
$m(B(x,r))=2r$. It is easy to see that (A2) holds for all $\tilde{x}\in\XX$, with $\delta=1$.
For the observable $\phi(x)=-\log\dist(x,\tilde{x})$, we can apply Theorem \ref{thm.max.abs.cts}, and deduce
that for Lebesgue a.e. $x\in[0,1]$, there exists $N(x)>0$ such that for all $n\geq N$:
$$\log n+\delta\log\log n\geq M_n(x)\geq \log n-\beta\log\log n,$$
for all $\delta>1$ and some $\beta>0$ (in fact $\beta=3$ will suffice). In this case we have
$M_n/\log n\to 1$ (almost surely).

In the case of having the observable $\phi(x)=\dist(x,\tilde{x})^{-\alpha}$, the results of Section \ref{sec.general}
imply that for Lebesgue a.e. $x\in[0,1]$, there exists $N(x)>0$ such that for all $n\geq N$:   
$$n^{\alpha}(\log n)^{\alpha\delta}\geq M_n(x)\geq n^{\alpha}(\log n)^{-3\alpha}.$$
In this case we only achieve almost sure bounds. Moreover Theorem \ref{thm.nolimit} applies to this example and so
no almost sure growth rate exists.

The paper is organised as follows: in Section \ref{sec.general} we will prove the main results, and along the way, we outline a general approach for gaining almost sure growth rates for other observable types. The main approach of proof uses the theory of dynamical Borel Cantelli Lemmas in shrinking target problems, see for example \cite{GHN,HNPV}.
In Section \ref{sec.general} we give the key propositions that allow us to deduce almost sure bounds on $M_n$. 
In Section \ref{sec.applications}, we verify that our assumptions (A1) and (A2) hold for a wide class of hyperbolic systems, including the H\'enon system, and  the (2-dimensional) geometric Lorenz map. 
In Section \ref{sec.srt} we consider refinements on the bounds for $M_n$ given extra information
on the quantitative recurrence statistics and/or regularity of the invariant measure. 

\subsection{Almost sure growth of $M_n$ for i.i.d random variables}\label{sec.iid}
Suppose that $\{X_i\}$ are a sequence of i.i.d random variables with probability distribution function
$F(x)=P(X\leq x)$, and (as before) let $M_n=\max\{X_1,\ldots,X_n\}$. In the case of i.i.d random variables, almost sure growth rates of $M_n$ are fully understood, see \cite{Embrechts, Galambos}, and it is worth contrasting these results with those established here in dynamical system setting. 

Let $\overline{F}(u)=1-P\{X<u\}$, so that $F(u)\to 0$ as $u$ approaches the upper end point of the distribution
(usually taken to be $\infty$). In the following discussion we will assume that the range of $X$ is the set $[0,\infty)$.
In the case of finite upper end-point for $X$, similar results hold. In the i.i.d case, almost sure behaviour of $M_n$ is characterised in terms of three sets of sequences $\{u_n\}$, $\{v_n\}$, and $\{w_n\}$.
In \cite{Galambos} the following results are established:
\begin{enumerate}
\item {\bf (Upper bounds).} Suppose that $u_n$ is such that $\sum_n\overline{F}(u_n)<\infty$, then
$P\{M_n\geq u_n,\mathrm{i.o}\}=0$, and so $P\{M_n< u_n,\mathrm{ev}\}=1$.
\item {\bf (Lower bounds).}
Suppose that $v_n$ is such that: 
$$\sum_n\overline{F}(v_n)=\infty,\;\textrm{and}\;\sum_n\overline{F}(v_n)e^{-n\overline{F}(v_n)}<\infty,$$
then $P\{M_n\leq v_n,\mathrm{i.o}\}=0$ and so $P\{M_n> v_n,\mathrm{ev}\}=1$,
\item {\bf (Intermediate bounds).}
Suppose that $w_n$  is an increasing sequence such that: 
$$\sum_n\overline{F}(w_n)=\infty,\;\textrm{and}\;\sum_n\overline{F}(w_n)e^{-n\overline{F}(w_n)}=\infty,$$
then $P\{M_n< w_n,\mathrm{i.o}\}=1=P\{M_n>w_n,\mathrm{i.o}\}$.
\end{enumerate}
So we have the bands of fluctuation $v_n\leq w_n \leq u_n$, but it is also  possible that $v_n\sim w_n\sim u_n$ (necessary to have limiting behavior).
Clearly $P\{v_n\leq M_n\leq u_n,\,\mathrm{ev}\}=1$. The relative sizes of these sequences $u_n$ and $v_n$ depend
on the tail behaviour of the distribution function $F(u)$.

For example, we may take
$u_n$ so that $\overline{F}(u_n)<1/(n(\log n)^{\delta})$ for any $\delta>1$,
and take $v_n$ so that $\overline{F}(v_n)> \delta n^{-1}(\log\log n)$ for any 
$\delta>1$. The sequence $w_n$ would satisfy (for example):
$$\frac{\log\log n}{n}>\overline{F}(w_n)>\frac{1}{n\log n}.$$
If we are to get almost sure growth rates of $M_n$, then
we must be able to choose $u_n$ and $v_n$ (above) with the property that
$u_n/v_n\to 1$. This is not always possible.

Consider the following examples. Suppose that $X_i$ are i.i.d exponential random variables, and $F(x)=1-e^{-x}$, for $x\in[0,\infty)$. The observable $-\log d(x,\tilde{x})$ has a similar 
distribution function in the deterministic setting.

If $u_n=\log n+\delta\log\log n$ for any $\delta>1$, then $P\{M_n\leq u_n,\,\mathrm{ev}\}=1$.
If $v_n=\log n-\delta'\log\log n$ then for any $\delta'>1$  
$P(M_n\geq v_n\,\mathrm{ev}\}=1$. In this case:
$$\lim_{n\to\infty}\frac{M_n}{\log n}=1,\quad\textrm{a.s}.$$

If we take the sequence $w_n=\log n$, then $P(M_n> w_n\,\mathrm{i.o}\}=1=P(M_n< w_n\,\mathrm{i.o}\}$. 
These results are consistent with what we observe in the dynamical system setting under the assumption that 
$\phi(x)=-\log\dist(x,\tilde{x})$, for example Theorems \ref{thm.max.log} and \ref{thm.max.abs.cts}. In the i.i.d case we get improved bounds on $M_n$ (as we might expect) since our knowledge of i.i.d. Borel-Cantelli lemmas is stronger than in the deterministic setting. For example our theorems as stated do not capture all the sequences $w_n$ with the properties above. Such sequences are harder to determine, since they are prescribed in terms of fine asymptotic bounds on the tails $\overline{F}(w_n)$. 

As a second example, consider the case where $X_i$ are i.i.d and governed by a probability distribution $F(x)$
with $\overline{F}(x)\sim 1/x$ for $x\to\infty$. The observable $ \phi(x)=\dist(x,\tilde{x})^{-\alpha}$, $\alpha>0$ has a similar distribution function in the 
deterministic setting. For any sequence 
$u_n$ and $t>0$, the sums $\sum_{n}\overline{F}(u_n)$,and $\sum_{n}\overline{F}(tu_n)$ either both converge or 
both diverge. Hence
$$P\{\limsup_{n\to\infty} M_n/u_n=\infty\}=1,\;\textrm{or}\;P\{\limsup_{n\to\infty} M_n/u_n=0\}=1.$$
This result is consistent with the result stated in Theorem \ref{thm.nolimit} given the observable
$\phi(x)=\dist(x,\tilde{x})^{-\alpha}$, $\alpha>0$. See also \cite[Theorem 4.4.4]{Galambos} which gives further characterization on the distribution functions for which there is no almost sure growth rate.

Finally, there are probability distributions which have the property that:
$$P\left(\lim_{n\to\infty}(M_n-a_n)=0\right)=1,\;\textrm{for some sequence $a_n\to\infty$.}$$ An example is the Gaussian distribution, with $a_n=\sqrt{2\log n}$. Such an `additive' almost sure law for $M_n$ will be achieved if $u_n=v_n+o(1)$. For certain dynamical
systems, an observable of the form $\phi(x)=\sqrt{|\log\dist(x,\tilde{x})|}$ would give rise to an additive law. We do not explore this in detail, but the relevant sequence $a_n$ (when it exists) can be derived from Proposition~\ref{prop.Mn.bounds}.

\section{Almost sure growth rates via Borel Cantelli sequences}\label{sec.bc.hyperbolic}
In this section we discuss almost sure growth rate results for certain hyperbolic and non-uniformly hyperbolic dynamical systems, and prove the main theorems. Our arguments depend upon the interplay between the distribution function of an observable and dynamical Borel-Cantelli lemmas, which give almost sure rates of approach to  a distinguished point $\tilde{x}$.

\subsection{Borel Cantelli sequences}
We begin by recalling the classical Borel-Cantelli Lemmas for a probability space $(\XX,\mathcal{B},\nu)$,
where $\mathcal{B}$ is the $\sigma$-algebra.
\begin{enumerate}
\item If $(A_n)_{n\geq 0}$ is a sequence of measurable events in $\XX$, and $\sum_{n=0}^{\infty}\nu(A_n)<\infty$,
then $\nu\{A_n,\textrm{i.o.}\}=0$.
\item If $(A_n)_{n\geq 1}$ is a sequence of independent events in $\XX$, and $\sum_{n=0}^{\infty}\nu(A_n)=\infty$,
then $\nu\{A_n,\textrm{i.o.}\}=1$. Moreover for $\nu$-a.e. $x\in\XX$ we have 
$$\lim_{n\to\infty}\frac{S_n(x)}{E_n}=1,$$
where $S_n(x)=\sum_{j=0}^{n-1} 1_{A_j}(x)$ and $E_n=\sum_{j=0}^{n-1}\nu(A_j)$
\end{enumerate}
In general (for dependent processes), we say that a sequence $A_j$ is a strong Borel Cantelli (SBC) sequence if  $E_n\to\infty$ and
$S_n(x)/E_n\to 1$ for $\nu$-a.e. $x\in\XX$. If $E_n\to\infty$, we sat that $A_j$ is a Borel Cantelli (BC)
sequence if $S_n(x)\to\infty$ for $\nu$-a.e. $x\in\XX$. To study almost sure growth rates of $M_n$, we will use refined estimates on the relative growth of $S_n(x)$ and $E_n$. In particular we have the following:
\begin{prop}\label{prop.bc2}
Suppose that $(f,\XX,\nu)$ is a measure preserving system with ergodic
SRB measure $\nu$,  that (A2) holds for $\tilde{x}\in \XX$ and the local dimension $d_{\nu}$ exists at $\tilde{x}$. 
\begin{enumerate}
\item If (A1) holds with $\Theta(n)=O(n^{-\zeta})$ for some $\zeta>0$, and
$B_n=B(\tilde{x},r_n)$ is a sequence of decreasing balls about $\tilde{x}$ with 
$\nu(B_n)=n^{-\beta}$ for some $\beta\in(0,1)$ then for $\nu$-a.e. $x\in\XX$:
\begin{equation}\label{eq:sprindzuk.weak1}
  S_n(x) =E_n  +
  O (E^{\beta'}_{n}),\;\textrm{for any}\;\beta'>\frac{2\delta^{-1}+\beta+\zeta(1-\beta)}{2\zeta(1-\beta)},
  \end{equation} 
  where $S_n(x)=\sum_{k\leq n}1_{B_k}(f^{k}(x))$ and  $E_n=\sum_{k\leq n} \nu(B_k)$ 
\item 
If (A1) holds with $\Theta(n)=O(\theta^{n}_0)$ for some $\theta_0<1$ and 
$B_n=B(\tilde{x},r_n)$ is a sequence of decreasing balls about $\tilde{x}$ with 
$\nu(B_n)=(\log n)^{\beta}/n$ then
for $\nu$-a.e. $x\in\XX$: 
\begin{equation}\label{eq:sprindzuk.weak2}
  S_n(x) =E_n  +
  O (E^{\beta'}_n),\;\textrm{for any}\;\beta'>\frac{2+\beta}{2(1+\beta)}.
  \end{equation} 
\end{enumerate}
\end{prop}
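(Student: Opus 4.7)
The plan is to derive both \eqref{eq:sprindzuk.weak1} and \eqref{eq:sprindzuk.weak2} from a G\'al--Koksma / Sprindzuk-type quantitative strong law: once the variance of $S_n - E_n$ is bounded by an explicit function $V(n)$, such a lemma gives
\[
S_n(x) - E_n = O\bigl(V(n)^{1/2}\,\log^{3/2 + \varepsilon} V(n)\bigr)\quad \text{for } \nu\text{-a.e.\ } x.
\]
Hence the problem reduces to controlling, for $j<k$, the off-diagonal correlations
\[
\mathcal{C}_{k-j}(1_{B_j}, 1_{B_k}, \nu) = \int 1_{B_j}\cdot(1_{B_k}\circ f^{k-j})\,d\nu - \nu(B_j)\,\nu(B_k),
\]
which, after integrating, account for the cross terms in $\int (S_n - E_n)^2\,d\nu$.

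To use (A1) I smooth each indicator: for a parameter $\epsilon>0$, let $\tilde 1_{B_j}$ be a Lipschitz bump with $\tilde 1_{B_j}\equiv 1$ on $B(\tilde x, r_j-\epsilon)$, vanishing outside $B(\tilde x, r_j+\epsilon)$, and with Lipschitz constant $\le 1/\epsilon$. Assumption (A2) controls the $L^1(\nu)$-error by $\|\tilde 1_{B_j} - 1_{B_j}\|_{1}\le C\epsilon^{\delta}$, and (A1) applied to the smoothed pair gives $|\mathcal{C}_{k-j}(\tilde 1_{B_j}, \tilde 1_{B_k}, \nu)|\le \Theta(k-j)/\epsilon^{2}$. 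Combining,
\[
|\mathcal{C}_{k-j}(1_{B_j}, 1_{B_k}, \nu)|\le C\bigl(\epsilon^{\delta} + \Theta(k-j)/\epsilon^{2}\bigr),
\]
and optimizing $\epsilon=\Theta(k-j)^{1/(\delta+2)}$ yields $|\mathcal{C}_{k-j}|\le C\,\Theta(k-j)^{\delta/(\delta+2)}$. Crucially I also retain the trivial estimate $|\mathcal{C}_{k-j}|\le 2\min(\nu(B_j),\nu(B_k))$, which is sharper for short lags where the decay bound has not yet kicked in.

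With these two correlation bounds in hand I split, for each $j$, the sum over $k>j$ at the crossover $\ell^{*}(j)$ where the trivial and decay bounds coincide: a power of $j$ in Case~1, and a logarithm of $j$ in Case~2. Summing then yields an explicit upper bound for $\int(S_n-E_n)^{2}\,d\nu$, of the form $n^{\sigma}$ with $\sigma=\sigma(\zeta,\delta,\beta)$ in Case~1, and of the form $(\log n)^{\beta+2}$ in Case~2. Trading $n$ for $E_n$ via $E_n\sim n^{1-\beta}/(1-\beta)$ (respectively $E_n\sim(\log n)^{1+\beta}/(1+\beta)$) and feeding the result into the G\'al--Koksma / Sprindzuk lemma produces $S_n-E_n=O(E_n^{\beta'})$ a.s.\ with the $\beta'$ stated in the proposition.

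The main technical obstacle is the bookkeeping in the variance sum. Using only the decay bound on $|\mathcal{C}_\ell|$ gives a strictly weaker exponent; the trivial bound has to be retained and balanced against it, with the crossover $\ell^{*}(j)$ handled differently depending on whether $\zeta\delta/(\delta+2)$ lies above or below $1$. Case~2 is significantly easier because exponential decay makes $\sum_\ell |\mathcal{C}_\ell|$ absolutely summable, so the variance is effectively dominated by the short-lag contribution counted through the trivial bound, giving a clean $(\log n)^{\beta+2}$ estimate. The dyadic-blocking step inside the G\'al--Koksma argument is routine and adds only the harmless logarithmic factor absorbed into the $O(E_n^{\beta'})$.
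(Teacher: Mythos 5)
Your proposal is correct and rests on the same two pillars as the paper's proof: the Sprindzuk/G\'al--Koksma quantitative Borel--Cantelli lemma (the paper's Proposition 7.1), applied after splitting the off-diagonal correlation sum into a near-diagonal part controlled by the trivial bound $|\mathcal{C}_{k-j}|\le \min(\nu(B_j),\nu(B_k))$ and a far part controlled by (A1) applied to Lipschitz mollifications whose error is governed by (A2). Where you genuinely diverge is in the mollification. The paper fixes, for each index $k$, a single smoothing width $(k(\log k)^2)^{-1/\delta}$, chosen so that the annuli where $\tilde f_k\ne 1_{B_k}$ have summable measure; it applies the Sprindzuk lemma to the mollified functions and transfers back to the indicators by a Borel--Cantelli argument. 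You instead optimize the width per lag, $\epsilon=\Theta(k-j)^{1/(\delta+2)}$, obtaining $|\mathcal{C}_{k-j}|\le C\,\Theta(k-j)^{\delta/(\delta+2)}$ directly for the indicators, so no transfer step is needed. The outputs are not identical: in Case 2 your crossover $\ell^*(j)$ of order $\log j$ reproduces the variance bound $(\log n)^{2+\beta}$ and hence exactly the stated threshold $(2+\beta)/(2(1+\beta))$; in Case 1 your method yields the threshold $\frac{1}{2}+\frac{\beta(\delta+2)}{2\zeta\delta(1-\beta)}$, which for $\beta<1$ is strictly smaller than the paper's $\frac{1}{2}+\frac{2+\beta\delta}{2\zeta\delta(1-\beta)}$, so your argument in fact implies a slightly sharper version of Case 1. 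Two points to tighten: the Sprindzuk lemma requires the variance bound in the additive form $\sum_{m<k\le n}h_k$ for \emph{arbitrary} windows $m<n$, so your per-index contributions should be recorded as a fixed sequence $h_j$ (your bookkeeping does this implicitly); and the optimized width $\Theta(k-j)^{1/(\delta+2)}$ must be smaller than the radii $r_j,r_k$ for (A2) to apply in the lag range where you invoke the decay bound --- beyond the crossover this amounts to $\delta\le d_{\nu}$, essentially the same implicit requirement the paper's fixed-width choice also carries.
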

\begin{rmk}
For the error term $O(E^{\beta'}_n)$ to be useful,  we require $\beta'<1$. In Case 1 
we require $\beta, \delta$ and $\zeta$ to satisfy
$$2\delta^{-1}<\zeta-\zeta\beta-\beta.$$
If $\zeta>2\delta^{-1}$, then the ball radii must be chosen so that $\nu(B_n)=n^{-\beta}$, with
$\beta<(\zeta-\delta^{-1})/(\zeta-1).$ In Case 2 we just require $\beta>0$. For systems with superpolynomial decay 
of correlations we can take $\beta$ arbitrarily close to 1. It is clear that if $\beta'<1$ we have 
$S_n (x)/E_n\to 1$ as $n\to\infty$ for $\nu$ a.e.\ $x\in \XX$. 
\end{rmk}
\begin{rmk}
The error estimates in Proposition \ref{prop.bc2} cannot be usefully applied
to balls $B_n$ with $\nu(B_n)=O(1/n)$. In Section~\ref{sec.srt}, we extend these estimates to more general
nested balls under assumptions on the quantitative recurrence statistics. 
\end{rmk}

\subsection{Almost sure growth rates for $M_n$: the general approach}\label{sec.general}
Using Proposition \ref{prop.bc2} we deduce almost sure bounds on the growth of $M_n$. 
We consider first the case of having exponential decay of correlations and having an absolutely continuous
invariant measure. We obtain the strongest bound in this case, see Proposition \ref{prop.Mn.bounds}. We then focus
on particular observables, Proposition \ref{prop.mn.obs}, and then consider the case of having polynomial decay
of correlations: Proposition \ref{prop.mn.poly}. In Section \ref{sec.proof.thm} we treat the case of having general SRB measures and conclude the proof of the main theorems as stated in Section \ref{sec.results}.

\subsubsection{Almost sure growth of $M_n$ for systems with exponential decay of correlations}\label{sec.general1} We discuss a  general approach to  finding upper and lower bounds on the growth rate of $M_n$ under the assumption that $\phi(x)=\psi(\dist(x,\tilde{x}))$, where $\psi:\mathbb{R}^{+}\to\mathbb{R}$ is a monotonically decreasing function
with $\psi(y)\to\infty$ as $y\to 0$. 
In the following we consider the case for dimension $d=1$. The arguments generalize easily to higher dimensions. 

\begin{prop}\label{prop.Mn.bounds}
Suppose that $(f,\XX,\nu)$ is a measure preserving system with an absolutely continuous ergodic invariant
measure $\nu$. 
Consider the observable $\phi(x)=\psi(\dist(x,\tilde{x}))$. Suppose that Condition (A1) holds with 
$\Theta(n)=O(\theta^{n}_0)$, and condition (A2) holds for $\tilde{x}$. Then for $\nu$-a.e. $x\in\XX$, and all $\delta_1>1$: 
\begin{equation}
\nu\left\{\psi\left(\frac{(\log n)^{3}}{n}\right)\leq M_n\leq
\psi\left(\frac{1}{n(\log n)^{\delta_1}}\right),\,\mathrm{ev}\right\}=1.
\end{equation}
\end{prop}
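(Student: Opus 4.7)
The plan is to translate both bounds into shrinking-target / orbit-avoidance events and then combine Borel--Cantelli estimates with a dyadic monotonicity argument. Because $\psi$ is strictly decreasing, $\{M_n \le \psi(r)\}$ is exactly the event that $f^i(x) \notin B(\tilde x,r)$ for every $0 \le i < n$, while $\{M_n \ge \psi(r)\}$ is the corresponding hitting event. Hence the upper bound reduces to avoidance of $B(\tilde x,r_n^{(u)})$ with $r_n^{(u)} = 1/(n(\log n)^{\delta_1})$, and the lower bound to hitting $B(\tilde x,\rho_n^{(\ell)})$ with $\rho_n^{(\ell)} = (\log n)^3/n$.

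For the \emph{upper bound} I would use only the classical (convergence) Borel--Cantelli lemma along the dyadic subsequence $n_k = 2^k$. Set $A_k := \{\exists\, i<n_{k+1}:f^i(x)\in B(\tilde x,r_{n_k}^{(u)})\}$. Absolute continuity of $\nu$ gives $\nu(B(\tilde x,r)) = O(r)$ near $\tilde x$, so
\[
\nu(A_k) \;\le\; n_{k+1}\cdot C r_{n_k}^{(u)} \;=\; O\bigl(k^{-\delta_1}\bigr),
\]
which is summable for $\delta_1 > 1$. Thus a.s.\ only finitely many $A_k$ occur, and for $n \in [n_k,n_{k+1}]$ the inclusions $B(\tilde x,r_n^{(u)})\subseteq B(\tilde x,r_{n_k}^{(u)})$ and $\{i<n\}\subseteq\{i<n_{k+1}\}$ convert ``$A_k$ fails'' into $M_n \le \psi(r_n^{(u)})$, as required.

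For the \emph{lower bound} I would apply Proposition~\ref{prop.bc2}(2) to balls $B_k = B(\tilde x,r_k)$ with $\nu(B_k) = (\log k)^{\beta}/k$ for some $\beta \in (2,3)$. This yields $S_n(x) = E_n + O(E_n^{\beta'})$ a.s., with $E_n \sim (\log n)^{\beta+1}/(\beta+1)$; subtracting the identity at $n/2$ produces
\[
S_n(x) - S_{n/2}(x) \;=\; c_\beta (\log n)^{\beta} + O\bigl((\log n)^{(\beta+1)\beta'}\bigr).
\]
Choosing $\beta'$ above but close to $(2+\beta)/(2(\beta+1))$ makes $(\beta+1)\beta' < \beta$ (possible precisely because $\beta > 2$), so $S_n - S_{n/2} \to \infty$ a.s. Hence a.s.\ for every large $n$ there is some $k \in [n/2,n-1]$ with $f^k(x) \in B_k$. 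Since $\nu$ is absolutely continuous with positive density $h(\tilde x)$ at $\tilde x$, one has $r_k \sim (\log k)^{\beta}/(2h(\tilde x)k)$, and because $\beta < 3$,
\[
r_k \;\le\; r_{n/2} \;\le\; \frac{(\log n)^{\beta}}{h(\tilde x)\,n} \;\le\; \frac{(\log n)^{3}}{n}
\]
for $n$ large. Thus $f^k(x)\in B(\tilde x,\rho_n^{(\ell)})$ and $M_n \ge X_{k+1} \ge \psi(\rho_n^{(\ell)})$ eventually.

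The \emph{main obstacle} is the lower bound, where $\beta$ has to be chosen in the narrow window $(2,3)$ to respect two competing constraints. On the one hand, $\beta > 2$ is required so that the SBC error $O(E_n^{\beta'})$ is strictly dominated by the main term $E_n - E_{n/2} \sim c_\beta(\log n)^{\beta}$; at the threshold $\beta = 2$ the two terms are of the same order and one cannot conclude $S_n - S_{n/2}\to\infty$. On the other hand $\beta < 3$ is needed so that the ball $B_k$ hit in the window $[n/2,n]$ actually sits inside the target $B(\tilde x,\rho_n^{(\ell)})$ rather than in a $(\log n)^{\beta-3}$-inflated version of it. Any $\beta \in (2,3)$ (say $\beta = 5/2$) satisfies both.
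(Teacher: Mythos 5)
Your proof is correct and follows essentially the same route as the paper's: the upper bound is the convergent Borel--Cantelli lemma applied to balls with $\nu(B_n)\sim 1/(n(\log n)^{\delta_1})$, and the lower bound applies Proposition~\ref{prop.bc2}(2) to balls with $\nu(B_k)=(\log k)^{\beta}/k$ and exploits that the SBC error $O(E_n^{\beta'})$ is dominated by the increment of $E_n$ --- which is exactly the paper's argument that the last hitting time $n_l$ satisfies $n_l\sim n$. Your window formulation $S_n-S_{n/2}\to\infty$ and the dyadic blocking in the upper bound are only cosmetic variants, and your constraint $\beta\in(2,3)$ matches (indeed slightly sharpens) the paper's choice $\beta>5/2$ with $\beta+\epsilon\le 3$.
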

\begin{proof}
In light of Proposition \ref{prop.bc2} it suffices to consider a sequence of balls $B_n$ satisfying
$\nu(B_n)=n^{-1}(\log n)^{\beta}$ for some $\beta>0$. This implies that their radii scale as 
$r_n\sim Cn^{-1}(\log n)^{\beta}$ for some $C>0$. Here, the constant $C$ depends on the density of $\nu$ at $\tilde{x}$. In dimension $d$ we would choose balls of radii $r_n^{1/d}$ but for the sake of exposition we take $d=1$ from now on.
 
Given $x\in\XX$ and $n\geq 1$, define $n_l(x):=\max\{k\leq n: T^k(x)\in B_k\}.$ 
Recalling that $S_n(x)=\sum_{k\leq n}1_{B_k}(x)$, 
$S_{n_l}(x)=S_n(x)$, and $M_{n}(x)\geq M_{n_l}(x).$ 

By Proposition \ref{prop.bc2} we have for any $\beta'>\frac{2+\beta}{2(1+\beta)}$ and for
$\nu$-a.e. $x\in\XX$:
 \begin{equation}\label{eq.p-bd}
E_n-E_{n_l}\leq O((E_n)^{\beta'}),
\end{equation}
Since $\nu(B_n)=n^{-1}(\log n)^{\beta}$ we have the asymptotic:
$E_n= C_{\beta}(\log n)^{1+\beta}+o(1)$. Hence equation \eqref{eq.p-bd} implies that
$$(\log n_{l})^{1+\beta}\geq (\log n)^{1+\beta}-O((\log n)^{\beta'(1+\beta)}),$$
Inverting for $n_l$ gives:
$$n_{l}\geq \exp\left\{(\log n)\left(1-c(\log n)^{(\beta'-1)(\beta+1)}\right)^{\frac{1}{\beta+1}}\right\},$$
for some $c>0$. An asymptotic expansion, implies we can write $n_{l}\geq\exp\{\log n-\ell(n)\}$, where $\ell(n)>0$, and
$$\ell(n)=O\left((\log n)^{(\beta'-1)(\beta+1)+1}\right).$$
For small values of  $\beta$, we have $\ell(n)=O((\log n)^{\sigma})$ for some $\sigma>0$ and in this case 
$\ell(n)\to\infty$. If 
$\beta'<1$ and $\beta>\beta'/(1-\beta')$, then $\ell(n)=o(1)$, and
$n_l\sim n$ as $n\to\infty$. For this choice of $\beta$ we have for all $\epsilon>0$:
$$M_n(x)\geq M_{n_l}(x)\geq \psi(r_{n_l})\sim\psi\left(C\frac{(\log n_l)^{\beta}}{n_l}\right)
\geq \psi\left(\frac{(\log n)^{\beta+\epsilon}}{n}\right).$$
We now find a lower bound on the value of $\beta$. We require the following to hold simultaneously:
$\beta>\beta'/(1-\beta')$, and $1>\beta'>\frac{2+\beta}{2(1+\beta)}$. For any 
$\beta>5/2$, then these inequalities are simultaneously satisfied (with $\beta'$ chosen accordingly). 

To get an upper bound on the growth of $M_n$, 
we choose a sequence $B_k$ such that  $\sum_{k}\nu(B_k)<\infty.$ Since $\nu$ is absolutely continuous with respect to Lebesgue measure, for $\delta_1>1$
a sequence of balls of radius $r_n\sim\frac{1}{n(\log n)^{\delta_1}}$ will do.
Putting these estimates together we achieve for any $\delta_1>1$:
\begin{equation}\label{eq.mn-as.obs}
\nu\left\{\psi\left(\frac{(\log n)^{3}}{n}\right)\leq M_n\leq
\psi\left(\frac{1}{n(\log n)^{\delta_1}}\right),\,\mathrm{eventually}\right\}=1.
\end{equation}
\end{proof}

\subsubsection{Almost sure growth of $M_n$ for specific observables}
The bounds on $M_n$ depend on the functional form of $\psi(y)$. If $\psi(y)$ is a power
law function of the form $\psi(y)=y^{-\alpha}$ for $\alpha>0$, then we achieve almost sure bounds on the growth
of $M_n$, but do not establish an almost sure growth rate. The following proposition deals with
exponential decay of correlations and an absolutely continuous invariant measure.
\begin{prop}\label{prop.mn.obs}
Suppose that $(f,\XX,\nu)$ is a measure preserving system with an absolutely continuous ergodic invariant
measure $\nu$. Suppose that Condition (A1) holds with $\Theta(n)=O(\theta^{n}_0)$, and condition (A2) holds for 
$\tilde{x}$. 
Then:
\begin{enumerate}
\item if $\phi(x)=-\log\dist(x,\tilde{x})$ we have
$$\lim_{n\to\infty}\frac{M_n(x)}{\log n}=\frac{1}{d}.\quad\textrm{a.s.}$$
\item if $\phi(x)=\dist(x,\tilde{x})^{-\alpha}$ for some $\alpha>0$,
for any $\delta>0$ we have
$$\nu\{x\in\XX:\, n^{\frac{\alpha}{d}}(\log n)^{-\frac{3\alpha}{d}}\leq M_n(x)\leq n^{\frac{\alpha}{d}}(\log n)^{\frac{\alpha\delta}{d}},\mathrm{eventually}\}=1.$$
\end{enumerate}
\end{prop}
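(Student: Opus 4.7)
The plan is to derive both claims as direct specializations of Proposition \ref{prop.Mn.bounds}, after promoting its dimension-one exposition to general $d$ and then substituting the two concrete forms of $\psi$.

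First I would observe that since $\nu$ is absolutely continuous and the local dimension at $\tilde{x}$ equals $d$, we have $\nu(B(\tilde{x},r)) \sim c\,r^{d}$ as $r\to 0$, so a ball of prescribed $\nu$-measure $m$ has radius comparable to $(m/c)^{1/d}$. The Borel--Cantelli input (Proposition \ref{prop.bc2}) is phrased in terms of $\nu$-measures of balls and is therefore dimension-free; translating the radii used in the proof of Proposition \ref{prop.Mn.bounds} through this scaling yields, for any $\delta_1>1$ and for $\nu$-a.e.\ $x \in \XX$, eventually
$$\psi\!\left(C_1\bigl((\log n)^{3}/n\bigr)^{1/d}\right)\;\leq\;M_n(x)\;\leq\;\psi\!\left(C_2\bigl(n(\log n)^{\delta_1}\bigr)^{-1/d}\right),$$
for positive constants $C_1,C_2$ depending on $\tilde{x}$ and the density of $\nu$ there.

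For part (1), I would substitute $\psi(y)=-\log y$ to obtain
$$\tfrac{1}{d}\bigl(\log n - 3\log\log n\bigr) - \tfrac{\log C_1}{d}\;\leq\; M_n(x)\;\leq\;\tfrac{1}{d}\bigl(\log n + \delta_1\log\log n\bigr) - \tfrac{\log C_2}{d}$$
eventually a.s., and then divide by $\log n$ and send $n\to\infty$ to conclude $M_n(x)/\log n \to 1/d$ almost surely. For part (2), I would substitute $\psi(y)=y^{-\alpha}$ to obtain
$$C_1^{-\alpha}\, n^{\alpha/d}(\log n)^{-3\alpha/d}\;\leq\; M_n(x)\;\leq\; C_2^{-\alpha}\, n^{\alpha/d}(\log n)^{\alpha\delta_1/d},$$
eventually a.s.; since $\delta_1>1$ is arbitrary and the multiplicative constants $C_i^{-\alpha}$ are absorbed in an arbitrarily small enlargement of the logarithmic exponent, this yields the stated bound.

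No real obstacle is expected in this argument: the only step that requires care is the dimension adjustment at the outset, which is pure bookkeeping via the scaling relation between $\nu$-measure and radius. The main qualitative observation is that in the logarithmic case the upper and lower bounds coincide to leading order, producing a genuine almost sure limit, whereas in the power case they differ in the exponent of $\log n$, so one recovers only almost sure two-sided growth bounds.
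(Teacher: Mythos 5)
Your proposal is correct and follows essentially the same route as the paper: both arguments specialize Proposition \ref{prop.Mn.bounds} (upper bound from summable balls of measure $\sim 1/(n(\log n)^{\delta_1})$ via the first Borel--Cantelli lemma, lower bound from the strong Borel--Cantelli estimate for balls of measure $\sim (\log n)^{3}/n$) and then substitute $\psi(y)=-\log y$ and $\psi(y)=y^{-\alpha}$, with the dimension handled by replacing radii $r_n$ by $r_n^{1/d}$ exactly as the paper remarks at the end of its proof. Your explicit treatment of the multiplicative constants and of the scaling $\nu(B(\tilde{x},r))\sim c\,r^{d}$ is just the bookkeeping the paper leaves implicit.
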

Proposition \ref{prop.mn.obs} gives an indication on how the upper and lower bounds on the growth rate of $M_n$
depend on the form of the observable. In Case 2, we cannot deduce an almost sure growth rate for $M_n$.
\begin{proof}
Case 1: to find the upper bound for $M_n$, we consider a sequence
of balls $B_n$  such  that $\sum_n\nu(B_n)<\infty$. Following the approach discussed above, we take balls of radius 
$r_n\sim C\frac{1}{n(\log n)^{\delta}}$, (for $\delta>1$), so that $\nu(B_n)=\frac{1}{n(\log n)^{\delta}}$.
Then  $$ \psi\left(\frac{1}{n(\log n)^{\delta}}\right) \sim \log n+\delta\log\log n,$$
and eventually $M_n\leq  \log n+\delta\log\log n$.
To get the lower bound, it suffices to consider a sequence of balls $B_n$ of radius 
$r_n\sim Cn^{-1}(\log n)^{\beta}$ for some $C,\beta>0$ so that $\nu(B_n)=n^{-1}(\log n)^{\beta}$.
From Proposition \ref{prop.Mn.bounds} we obtain for any $\beta\geq 2.5$ and $\epsilon>0$:
$$M_n(x)\geq \psi\left(\frac{(\log n)^{\beta+\epsilon}}{n}\right)=\log n-(\beta+\epsilon)\log\log n.$$
Hence, almost surely we have $M_n/\log n\to 1$. 

Case 2: to find the upper bound for $M_n$ is suffices to take $B_n$ so that $\sum_n\nu(B_n)<\infty$. 
If we take balls of radius $r_n\sim C\frac{1}{n(\log n)^{\delta}}$, ($\delta>1$), we obtain:
$$M_n \leq\psi\left(\frac{1}{n(\log n)^{\delta}}\right)\leq n^{\alpha}(\log n)^{\alpha\delta}.$$
To get the lower bound, again consider a sequence of balls $B_n$ of radius 
$r_n\sim Cn^{-1}(\log n)^{\beta}$ for some $C,\beta>0$. We obtain for any $\beta\geq 2.5$, and any $\epsilon_1>0$:
$$M_n(x)\geq \psi\left(\frac{(\log n)^{\beta+\epsilon_1}}{n}\right),$$
and this give the required lower bound.

If $\XX$ has dimension $d>1$ the following statements may be proved in exactly the same way, by taking instead
balls of radii $r^{1/d}_n$.

\end{proof} 

\subsubsection{Almost sure growth of $M_n$ for systems with polynomial decay of correlations}
We now consider the case of polynomial decay of correlations. Again, for simplicity we take
dimension $d=1$. We consider the case of having an absolutely continuous invariant measure, and a general 
observable $\phi(x)=\psi(\dist(x,\tilde{x}))$.
\begin{prop}\label{prop.mn.poly}
Suppose that $(f,\XX,\nu)$ is a measure preserving system with an absolutely continuous ergodic invariant
measure $\nu$, and $\phi(x)=\psi(\dist(x,\tilde{x}))$. Suppose that Condition (A1) holds with $\Theta(n)=O(n^{-\zeta})$ for some $\zeta>0$, 
and Condition (A2) holds for $\tilde{x}$. Then for all $\delta_1>0$,
\begin{equation}\label{eq.mn.poly}
\nu\left\{  \psi\left(\frac{1}{n^{\beta}}\right)  \leq M_n\leq
\psi\left(\frac{1}{n(\log n)^{\delta_1}}\right),\,\mathrm{ev}\right\}=1,\;
\textrm{for any}\; \beta<\frac{\zeta-2\delta^{-1}}{1+\zeta}. 
\end{equation}
\end{prop}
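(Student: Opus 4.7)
The plan is to follow the proof structure of Proposition \ref{prop.Mn.bounds}, but with Case 1 of Proposition \ref{prop.bc2} replacing Case 2 wherever polynomial rates are needed. The upper bound comes directly from the classical first Borel--Cantelli lemma applied to a summable sequence of balls, while the lower bound hinges on choosing the radius decay $\nu(B_n) = n^{-\beta}$ so that the error exponent $\beta'$ in Case 1 of Proposition \ref{prop.bc2} can be taken strictly below $1$.

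For the upper bound, fix $\delta_1 > 1$ and pick a nested sequence $B_n = B(\tilde{x}, r_n)$ with $\nu(B_n) \asymp 1/(n(\log n)^{\delta_1})$; absolute continuity of $\nu$ and (A2) ensure such $r_n$ exist and are comparable to $1/(n(\log n)^{\delta_1})$. Since $\sum_n \nu(B_n) < \infty$, the first (classical) Borel--Cantelli lemma gives, for $\nu$-a.e.\ $x$, an $N(x)$ with $f^k(x) \notin B_k$ for all $k \geq N(x)$. Monotonicity of $\psi$ forces $\phi(f^k(x)) \leq \psi(r_k)$ for such $k$; because $\psi(r_n)$ is increasing and unbounded, the contribution of $f^j(x)$ for $j < N(x)$ is absorbed and $M_n(x) \leq \psi(1/(n(\log n)^{\delta_1}))$ eventually.

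For the lower bound, fix any $\beta_0 < (\zeta - 2\delta^{-1})/(1+\zeta)$ and choose $\beta \in (\beta_0, (\zeta - 2\delta^{-1})/(1+\zeta))$; take $B_n = B(\tilde{x}, r_n)$ with $\nu(B_n) = n^{-\beta}$, so $r_n \asymp n^{-\beta}$ by (A2) and absolute continuity. A direct algebraic manipulation shows that the constraint $\beta < (\zeta - 2\delta^{-1})/(1+\zeta)$ is precisely equivalent to the existence of some $\beta' \in ((2\delta^{-1} + \beta + \zeta(1-\beta))/(2\zeta(1-\beta)),\, 1)$, and for any such $\beta'$ Case 1 of Proposition \ref{prop.bc2} delivers $S_n(x) = E_n + O(E_n^{\beta'})$ for $\nu$-a.e.\ $x$. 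Defining $n_l(x) = \max\{k \leq n : f^k(x) \in B_k\}$, one has $S_{n_l}(x) = S_n(x)$, so $E_n - E_{n_l} = O(E_n^{\beta'})$. Since $E_n \sim n^{1-\beta}/(1-\beta)$ and $\beta' < 1$, this forces $n_l/n \to 1$, hence $M_n(x) \geq \psi(r_{n_l}) \geq \psi(Cn^{-\beta})$; monotonicity of $\psi$ then yields $M_n(x) \geq \psi(1/n^{\beta_0})$ eventually, which is the desired bound since $\beta_0$ was arbitrary below the critical value.

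The only genuinely delicate point is the calibration of the two exponents: one must verify that the cutoff $\beta < (\zeta - 2\delta^{-1})/(1+\zeta)$ stated in the proposition is exactly the constraint that makes $E_n^{\beta'}$ asymptotically smaller than $E_n$, and absorb a sliver of $\beta$ (by passing to $\beta_0 < \beta$) so that the multiplicative constant $C$ coming from $(A2)$ and from the asymptotic $n_l/n \to 1$ can be absorbed through the monotonicity of $\psi$. Beyond this bookkeeping, everything reduces to routine application of earlier results: the classical first Borel--Cantelli lemma for the upper bound, Proposition \ref{prop.bc2} Case 1 for the quantitative lower bound, and the elementary asymptotics of $E_n = \sum_{k \leq n} k^{-\beta}$.
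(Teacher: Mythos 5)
Your proposal is correct and follows essentially the same route as the paper's proof: the first Borel--Cantelli lemma with $\nu(B_n)\asymp 1/(n(\log n)^{\delta_1})$ for the upper bound, and Case~1 of Proposition~\ref{prop.bc2} with $\nu(B_n)=n^{-\beta}$, the quantity $n_l(x)$, and the computation that $\beta<(\zeta-2\delta^{-1})/(1+\zeta)$ is exactly the condition allowing $\beta'<1$, for the lower bound. Your explicit absorption of constants by passing to $\beta_0<\beta$ is a slightly more careful rendering of a step the paper leaves implicit.
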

\begin{rmk}
In Proposition \ref{prop.mn.poly} we require $\beta>0$ for the estimate in equation \eqref{eq.mn.poly} to be useful. This will hold provided $\frac{\zeta-2\delta^{-1}}{1+\zeta}<1$. For systems with superpolynomial decay of correlations we can take $\beta$ arbitrarily close to 1.
\end{rmk}
\begin{rmk} For the explicit representations $\psi(y)=-\log y$, or $\psi(y)=y^{-\alpha}$, we can
derive similar statements to those stated in Proposition \ref{prop.mn.obs} (but with slightly weaker bounds). 
\end{rmk}
\begin{proof}
We follow the proof of Proposition \ref{prop.Mn.bounds}, but instead take a sequence of balls $B_n$ with
$\nu(B_n)=n^{-\beta}$ for some $\beta\in(0,1)$.
As before 
$S_{n_l}(x)=S_n(x)$, and $M_{n}(x)\geq M_{n_l}(x)$, (with $n_l$ as defined in Proposition \ref{prop.Mn.bounds}). 
By Proposition \ref{prop.bc2} we have
 \begin{equation}\label{eq.p-bd2}
E_n-E_{n_l}\leq O((E_n)^{\beta'}),
\end{equation}
for any 
\begin{equation}\label{eq.beta.bound}
\beta'>\frac{2\delta^{-1}+\beta+\zeta(1-\beta)}{2\zeta(1-\beta)}.
\end{equation}
To get a lower bound on the growth rate of $M_n$, we choose balls $B_n$ with $\nu(B_n)=n^{-\beta}$ for some $\beta>0$, 
and hence $r_n\sim Cn^{-\beta}$ for some $C>0$.
It follows that $E_n\sim Cn^{1-\beta}$, and 
using equation \eqref{eq.p-bd2} we obtain
$$n_{l}^{1-\beta}\geq n^{1-\beta}-O(n^{\beta'(1-\beta)}).$$
If $\beta'<1$ then $n_l\geq n(1+o(1))$ as $n\to\infty$. Hence we obtain
$$M_n(x)\geq M_{n_l}(x)\geq \psi(r_{n_l})\geq \psi\left(\frac{C}{n^{\beta/d}}\right),$$
again for some $C>0$. To compute the optimal value of $\beta$, we just require $\beta'<1$. From equation 
\eqref{eq.beta.bound} this will be valid provided: $$\beta<\frac{\zeta-2\delta^{-1}}{1+\zeta}.$$
The upper bound for $M_n$ can be achieved by setting $r_n\sim C/[n(\log n)^{\delta_1}]$, (for arbitrary $\delta_1>1$), and using the first Borel-Cantelli Lemma.
\end{proof}

\subsubsection{Proof of main results}\label{sec.proof.thm}
Using Propositions \ref{prop.Mn.bounds}, \ref{prop.mn.obs} and \ref{prop.mn.poly} we can now complete the proof
of the main theorems. The proof of Theorem \ref{thm.nolimit} is more elaborate and hence is proved in Section 
\ref{sec.proof.nolimit}.

\paragraph{Proof of Theorem \ref{thm.max.log}.} To prove this theorem, we follow the proof
of Proposition~\ref{prop.Mn.bounds}. In the case of SRB measures we cannot make precise asymptotic statements on the radii $r_n$ of the balls $B_n$. This is due
to the fluctuation properties of $\nu$ on balls $B_n$ when $\nu(B_n)\to 0$. When the local dimension $d_{\nu}$ at 
$\tilde{x}$ exists we have the following: for all $\epsilon>0$, there exists $N$, such that for all $n\geq N$:  
$$r_n\in [\nu(B_n)^{1/d_{\nu}+\epsilon},\nu(B_n)^{1/d_{\nu}-\epsilon}].$$ 

Given this, we can now prove Case 1 of Theorem \ref{thm.max.log}. Following the proof of Proposition \ref{prop.mn.obs}, 
it suffices to take the sequence of
balls $B_n:=B(\tilde{x},r_n)$ with $\nu(B_n)=\frac{1}{n(\log n)^{\delta}}$, ($\delta>1$). Using local dimension
estimates, we obtain for all $\epsilon>0$:
$$M_n \leq\psi(r_n)\leq \left(d^{-1}_{\nu}+\epsilon\right)\log n,$$
where in the above estimate the $\delta\log\log n$ contribution has been subsumed.  
To get the lower bound, again consider a sequence of balls $B(\tilde{x},r_n)$ with 
$\nu(B_n)=n^{-1}(\log n)^{\beta}$, and apply Proposition \ref{prop.Mn.bounds} for $\beta\geq 3$. 
Then for all $\epsilon>0$:
$$M_n(x)\geq \psi (r_{n_l})\geq \left(d^{-1}_{\nu}-\epsilon\right)\log n.$$
(noting that the $\beta\log\log n_l$ contribution is asymptotically insignificant).
As before, $n_l(x):=\max\{k\leq n: T^k(x)\in B_k\}$, and we have $n_l\sim n$ for $\beta\geq 3$.  
The almost sure bounds on $M_n$ follow for Case 1.
 
Now consider Case 2. We repeat the proof of Proposition~\ref{prop.mn.poly}. The upper bound
on $M_n$ is identical to that achieved in Case 1 above, and hence almost surely
$$\limsup_{n\to\infty}\frac{M_n(x)}{\log n}\leq \frac{1}{d_{\nu}}.$$
For the lower bound, we take balls $B(\tilde{x},r_n)$ with
$\nu(B_n)=n^{-\beta}$, and $\beta$ satisfying
$$\beta<\frac{\zeta-2\delta^{-1}}{1+\zeta},$$
so that Proposition \ref{prop.Mn.bounds} applies with $n_l\sim n$.
Using the fact that $M_n\geq \psi(r_{n_l})$, we obtain for all $\epsilon>0$:
$$\liminf_{n\to\infty}\frac{M_n(x)}{\log n}\geq \frac{\beta}{d_{\nu}}-\epsilon,$$
and the conclusion follows.

\paragraph{Proof of Theorem \ref{thm.max.abs.cts}.} The proof of this result is straightforward.
In Case 1, we just apply Proposition \ref{prop.mn.obs}, and hence equation \eqref{eq.max.ev.exp}
follows for $\beta=3$. For Case 2, the result immediately follows from Proposition \ref{prop.mn.poly}

\paragraph{Proof of Theorem \ref{thm.powerdist}.} The proof of this result is achieved by repeating the method of proof
of Theorem \ref{thm.max.log} to the case of the observable function $\phi(x)=-\dist(x,\tilde{x})^{-\alpha}$,
and the result follows. 

\subsubsection{Non-existence of an almost sure growth rate for $M_n$}\label{sec.proof.nolimit}
In this section we prove Theorem \ref{thm.nolimit}, and show that for certain observables we have no almost sure scaling
sequence for $M_n$. We begin with the following lemma.
\begin{lemma}\label{lem.nolimit1}
Suppose that $(f,\XX,\nu)$ satisfies the assumption of Theorem \ref{thm.nolimit}, and suppose that
for any monotone sequence $r_n\to 0$ we have $\sum_n \nu(B(\tilde{x},r_n))=\infty$. Consider
the observable function $\phi(x)=\psi(\dist(x,\tilde{x}))$,
with $\psi(y)$ monotonically decreasing, and $\lim_{y\to 0}\psi(y)=\infty$. Then we have:
$$\nu\{M_n\geq \psi(r_n),\mathrm{i.o.}\}=1.$$
\end{lemma}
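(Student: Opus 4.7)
The plan is to reduce the claim to a dynamical second Borel--Cantelli statement for the shrinking target sets $A_n := f^{-(n-1)}(B(\tilde{x}, r_n))$. Since $\psi$ is monotonically decreasing, the event $\{f^{n-1}(x) \in B(\tilde{x}, r_n)\}$ forces $X_n = \psi(\dist(f^{n-1}(x),\tilde{x})) \geq \psi(r_n)$, hence $M_n \geq \psi(r_n)$. It therefore suffices to show $\nu\{A_n,\mathrm{i.o.}\} = 1$. By $f$-invariance of $\nu$ we have $\nu(A_n) = \nu(B(\tilde{x}, r_n))$, so the divergence hypothesis gives $E_N := \sum_{n \leq N}\nu(A_n) \to \infty$.

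The central step is a pair-correlation estimate. For $m < n$, $f$-invariance yields $\nu(A_m \cap A_n) = \int 1_{B_m} \cdot (1_{B_n} \circ f^{\,n-m})\, d\nu$, where $B_n := B(\tilde{x}, r_n)$. Since the indicator of a Euclidean ball has BV norm bounded by a universal constant and $\|1_{B_n}\|_{L^1_\nu} = \nu(B_n)$, the assumed BV-versus-$L^1$ decay of correlations yields
$$|\nu(A_m \cap A_n) - \nu(A_m)\nu(A_n)| \leq C\,\Theta(n-m)\,\nu(B_n).$$
Summing over $1 \leq m < n \leq N$, interchanging the order of summation, and using $\sum_j \Theta(j) < \infty$, the total off-diagonal correlation error is $O(E_N)$.

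Given this, a routine bound shows $\sum_{m,n\leq N}\nu(A_m\cap A_n) \leq E_N + E_N^2 + O(E_N) = E_N^2(1+o(1))$ as $E_N \to \infty$, and the Kochen--Stone lemma (equivalently, the Chung--Erd\H{o}s inequality) delivers
$$\nu\{A_n,\mathrm{i.o.}\} \;\geq\; \limsup_{N\to\infty}\frac{E_N^2}{\sum_{m,n\leq N}\nu(A_m\cap A_n)} \;=\; 1,$$
proving the lemma.

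The main technical obstacle is ensuring the pair-correlation estimate is actually available: one needs that indicators of Euclidean balls lie in the BV class with uniformly bounded norm. In dimension one this is automatic; in higher dimensions one typically sandwiches $1_{B_m}$ between Lipschitz approximations supported on slightly inflated and deflated balls $B(\tilde{x}, r_m \pm \epsilon)$, with the resulting discrepancy absorbed using assumption (A2) and a suitable choice of $\epsilon$. Once this is handled, the argument above converts the summability of correlations directly into the desired i.o.\ conclusion.
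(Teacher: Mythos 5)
Your proposal is correct, and the direction of reduction you use ($X_n\geq\psi(r_n)\Rightarrow M_n\geq\psi(r_n)$, so it suffices to prove the shrinking-target statement $\nu\{A_n,\mathrm{i.o.}\}=1$) is exactly the right one. Where you differ from the paper is in how the two ingredients are supplied. The paper's proof is essentially a citation argument: it quotes the dynamical Borel--Cantelli lemma for BV-versus-$L^1$ systems from \cite{HNPV,Kim} to get $\nu\{X_n\geq\psi(r_n),\mathrm{i.o.}\}=1$, and then invokes \cite[Theorem 4.2.1]{Galambos} for the pathwise identity $\{M_n\geq u_n,\mathrm{i.o.}\}=\{X_n\geq u_n,\mathrm{i.o.}\}$ for monotone $u_n$. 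You instead prove the Borel--Cantelli statement from scratch: the pair-correlation bound $|\nu(A_m\cap A_n)-\nu(A_m)\nu(A_n)|\leq C\,\Theta(n-m)\,\nu(B_n)$ with the BV function in the first slot and the $L^1$ function in the second, summability of $\Theta$ giving an $O(E_N)$ total error, and Kochen--Stone/Chung--Erd\H{o}s to conclude $\nu\{A_n,\mathrm{i.o.}\}\geq\limsup_N E_N^2/(E_N^2+O(E_N))=1$. This is in substance the proof of the result the paper cites, so your argument is self-contained where the paper's is not; you also sidestep the Galambos theorem entirely by using only the trivial inclusion $\{X_n\geq u_n\}\subset\{M_n\geq u_n\}$, which is all that this direction of the lemma requires (the full Galambos equivalence is only needed elsewhere, in the proof of Theorem \ref{thm.nolimit}). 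Your closing caveat about whether indicators of balls lie in the relevant BV class is well placed: in the one-dimensional examples to which Theorem \ref{thm.nolimit} is actually applied this is automatic, and your proposed Lipschitz sandwich controlled by (A2) is the same device the paper uses in its Borel--Cantelli arguments.
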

\begin{proof}
For systems $(f,\XX,\nu)$ that satisfy decay of correlations of BV versus $L^1$ it is shown that 
$\nu\{X_n\geq\psi(r_n),\mathrm{i.o.}\}=1$, where $B(\tilde{x},r_n)$ is any sequence of balls satisfying 
$\sum_n\nu(B_n)=\infty$, see \cite{HNPV,Kim}. From \cite[Theorem 4.2.1]{Galambos}, it is shown that for any
monotone sequence $u_n\to\infty$ we have
$$\nu\{M_n\geq u_n,\mathrm{i.o.}\}=\nu\{X_n\geq u_n,\mathrm{i.o}\}.$$
Hence the result follows by taking the sequence $u_n=\psi(r_n)$.
\end{proof}
Now consider an arbitrary sequence $u_n\to\infty$.
The event $\{X_n\geq u_n\}$ corresponds to the event $B(\tilde{x},r_n)$ with $u_n=\psi(r_n)=r^{-\alpha}_n$. Hence there
is a one-to-one correspondence between the sequences $u_n\to\infty$ and $r_n\to 0$. We consider two cases, namely
upper and lower sequences $r^{U}_n$, $r^{L}_n$ such that
$\sum_n \nu(B(\tilde{x},r^{U}_n))<\infty$ and $\sum_n\nu(B(\tilde{x},r^{L}_n))=\infty$.
In the former case, the first Borel Cantelli lemma implies that $\nu\{X_n\geq u_n,\mathrm{i.o}\}=0$,
where $u_n:=\psi(r^{U}_n)$. For any $t>0$, consider now the sequence of balls $B(\tilde{x},tr^{U}_n)$.
By absolute continuity of $\nu$, we have
$$\sum_n\nu(B(\tilde{x},t r^{U}_n))\sim
\sum_n \rho(\tilde{x})tr^{U}_n\sim t \sum_n\nu(B(\tilde{x},r^{U}_n))<\infty.$$ 
It follows that for all $t>0$:
$$\nu\{X_n\geq t^{-\alpha}\psi(r^{U}_n),\mathrm{i.o}\}=0\;\implies\;\nu\{X_n\leq t^{-\alpha}\psi(r^{U}_n),
\mathrm{eventually}\}=1,$$
Thus if $u_n=\psi(r_n)$ is such that the corresponding measures $\nu(B(\tilde{x},r_n)$) are summable, then 
$$\nu\left(\limsup_{n\to\infty}\frac{M_n(x)}{u_n}=0\right)=1.$$

Now suppose that $\sum_n \nu(B(\tilde{x},r^{L}_n))=\infty$ for some sequence $r^{L}_n.$ Then by Lemma \ref{lem.nolimit1}
it follows that $\nu\{M_n\geq u_n,\mathrm{i.o}\}=1$, 
where $u_n=\psi(r^{L}_n)$. Again by scaling the radii $r^{L}_n$ by a multiplying factor $t>0$, we find that the non-summability
properties of $\nu(B_n)$ are preserved. Hence for any $t>0$ we have
$\nu\{X_n\geq t^{-\alpha}\psi(r^{L}_n),\mathrm{i.o}\}=1,$
and so
$\nu\{M_n\geq t^{-\alpha}\psi(r^{L}_n),\mathrm{i.o}\}=1.$
Therefore for all such sequences $u_n=\psi(r_n)$ with $\sum_n\nu(B_n)=\infty$ we have:
$$\nu\left(\limsup_{n\to\infty}\frac{M_n(x)}{u_n}=\infty\right)=1.$$
This completes the proof of Theorem \ref{thm.nolimit}
\begin{rmk}
In the proof of Theorem \ref{thm.nolimit} we used the fact that $\nu$ was absolutely continuous. In the proof, the main property required 
is that we have $\nu(B(\tilde{x},tr))\sim t\nu(B(\tilde{x},r))$ for all $t>0$ and for all $r$ sufficiently small.
We also require the conclusion of Lemma \ref{lem.nolimit1} to apply. In Section \ref{sec.srt} we will see that the 
conclusion of Theorem \ref{thm.nolimit} applies to a broader class of systems.
\end{rmk}

\section{Application of results: non-uniformly hyperbolic systems}\label{sec.applications}
In this section we discuss a range of dynamical systems to which our results apply. These include diffeomorphisms modelled by Young towers, billiard maps, Lozi maps,
geometric Lorenz maps and intermittent type maps. From a point of view of application of results, this means checking the assumptions e.g. (A1), (A2) apply to the particular system at hand. Often the local dimension estimates apply only almost surely, and similarly with respect to assumption (A2). 

\subsubsection*{Rank one Young towers}
We consider diffeomorphisms modelled by Young towers with exponential tails. See~\cite{Young} for details of Young Towers.

\begin{thm}\label{thm.tower}
Suppose that $f:\XX\to \XX$ is a $C^2$ diffeomorphism modelled by a Young tower with SRB measure $\nu$ and 
$\nu\{R>n\}=O(\theta^{n}_0)$ for some $\theta_0<1$. For $\nu$ a.e. $\tilde{x}\in \XX$, if $\phi(x)=-\log(\dist(x,\tilde{x}))$
then we have
\begin{equation}\label{eq.max.as}
\lim_{n\to\infty}\frac{M_n(x)}{\log n}=\frac{1}{d_{\nu}},\quad\textrm{a.s.}
\end{equation}
\end{thm}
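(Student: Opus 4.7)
The plan is to reduce Theorem \ref{thm.tower} to Case 1 of Theorem \ref{thm.max.log}, so the entire task is to verify, for $\nu$-a.e. $\tilde{x}\in\XX$, the three hypotheses of that theorem: (a) assumption (A1) with exponential rate $\Theta(n)=O(\theta^n)$ for some $\theta<1$; (b) the existence of the pointwise local dimension $d_\nu$ at $\tilde{x}$; and (c) assumption (A2) at $\tilde{x}$. Once these are in hand, Case 1 of Theorem \ref{thm.max.log} delivers $M_n/\log n\to 1/d_\nu$ a.s. immediately.

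Condition (a) is a direct output of Young's tower construction. For a tower with return-time tail $\nu\{R>n\}=O(\theta_0^n)$, Young's theorem yields exponential decay of correlations for Lipschitz (in fact Hölder) observables against $L^\infty$, and hence the Lipschitz-versus-Lipschitz decay required by (A1), with $\Theta(n)=O(\theta^n)$ for some $\theta\in[\theta_0,1)$. I would quote this directly rather than rederive it. For (b), the existence of the pointwise dimension for $\nu$-a.e. $\tilde{x}$, with a single value $d_\nu$ under ergodicity, is a consequence of the Barreira--Pesin--Schmeling / Ledrappier--Young dimension theory applied to hyperbolic SRB measures; the Young tower structure guarantees the hypotheses (nonzero Lyapunov exponents, SRB conditionals along unstables, Hölder stable foliation) so the theorem applies.

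The real work, and the main obstacle, is (c). One needs to show that for $\nu$-a.e. $\tilde{x}$ there are $\delta>0$, $C>0$, $r_0>0$ such that
\[
\nu\bigl(B(\tilde{x},r+\epsilon)\setminus B(\tilde{x},r)\bigr)\leq C\epsilon^{\delta}
\quad\text{for all } \epsilon<r<r_0.
\]
The strategy is to disintegrate $\nu$ along the measurable partition by unstable leaves, $\nu=\int\nu^u_x\,d\hat{\nu}(x)$. Since $f$ has SRB measure, each conditional $\nu^u_x$ is absolutely continuous with respect to arc length on the corresponding unstable disk, with a density that is bounded and Hölder on each disk. On a single unstable disk, the intersection with a thin spherical annulus of width $\epsilon$ has Riemannian arc length of order $\epsilon^{\alpha}$ for some $\alpha>0$, provided the unstable direction is uniformly transverse to the tangent sphere $\partial B(\tilde{x},r)$; this transversality holds on a set of full $\hat{\nu}$-measure. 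Integrating over the transverse direction via the Hölder stable holonomy gives the annulus bound with some $\delta>0$. A standard Borel--Cantelli argument over dyadic scales $r_k,\epsilon_k$ then upgrades the estimate to the pointwise statement (A2), for $\nu$-a.e. $\tilde{x}$.

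The hardest technical point is controlling transversality of the unstable direction to spheres centered at $\tilde{x}$ uniformly in the scale $r$, together with verifying that the product-like structure of $\nu$ on the Young tower is sufficiently regular for the Borel--Cantelli bookkeeping to go through at $\nu$-a.e. base point. Once (a), (b), (c) are in place, Theorem \ref{thm.max.log}(1) applies verbatim and yields \eqref{eq.max.as}.
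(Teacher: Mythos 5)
Your reduction to Case 1 of Theorem \ref{thm.max.log} is the right frame, and your treatment of (a) and (b) matches the paper: (A1) with exponential rate comes straight out of Young's construction, and the a.e.\ existence of $d_\nu$ is quoted from dimension theory. The gap is in (c). You assert that the intersection of an unstable leaf with an annulus of width $\epsilon$ has length $O(\epsilon^{\alpha})$ ``provided the unstable direction is uniformly transverse to the tangent sphere,'' and that this transversality holds for $\hat\nu$-a.e.\ leaf. That is precisely the hard point, and it is not something you can get for free: for a fixed center $\tilde{x}$ the leaf will be tangent to the sphere $\partial B(\tilde{x},r)$ somewhere for typical $r$, and you need to control the conditional measure near those tangencies \emph{uniformly over all radii $r<r_0$ and all widths $\epsilon<r$} to obtain (A2) as stated. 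Your dyadic Borel--Cantelli upgrade does not address this uniformity in $r$; summing over dyadic $r_k$ gives you the bound only along a sequence of scales, not for all $r$ simultaneously, and the exceptional sets depend on $\tilde{x}$ in a way you have not quantified.

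The paper sidesteps exactly this difficulty. It does \emph{not} verify (A2) in full for Young towers; instead it proves a strictly weaker annulus estimate (Lemma \ref{lem.annulus.tower}), valid only for $\epsilon<r^{\hat\delta}$, imported from the quantitative estimates of Chazottes--Collet \cite[Proposition 4.2]{CC}. Because (A2) enters the argument only through the Lipschitz mollifications $\tilde{f}_k$ of the indicators $1_{B_k}$ in the proof of Proposition \ref{prop.bc2} (one needs $\nu\{\tilde{f}_k\neq 1_{B_k}\}$ summable), the weaker estimate suffices after shrinking the mollification width, and the paper checks that the asymptotics of $\theta_2(n)$ are unaffected thanks to the exponential decay in (A1). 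So your proposal either needs to actually prove full (A2) --- which requires a genuinely new transversality argument you have only named, not given --- or it needs to be restructured the way the paper does it: prove the weaker annulus bound and then reopen the proof of the Borel--Cantelli proposition to confirm that the weaker bound is enough. As written, the proposal has a real hole at its central step.
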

To prove this result we check (A1), together with a slightly weaker version of (A2) (which holds for $\nu$ a.e. 
$\tilde{x}$). This is a technical argument and we give the proof in Section \ref{sec.appendix}. A natural application is the H\'enon map 
$$f(x,y)=(1-ax^2+y,bx),$$ where $(a,b)\in\mathbb{R}^2$. It is shown that for a positive measure set of these parameters,
$(f,\XX,\nu)$ admits a Young tower with exponential return time asymptotics, see \cite{BY}. 

\subsubsection*{Lozi maps}
Lozi maps are generally given by the equation $T(x, y) = (1+by-a|x|, x).$ The Lozi map can be modelled by a Young tower \cite{Young} with exponential tails for the return time function $R$ and hence (A1) applies. However it has discontinuities in its derivative (at $x=0$), and hence we can not immediately apply Theorem \ref{thm.tower}. However it is shown in \cite{HNPV,GHN} that (A2) holds, and so Case 1 of Theorem \ref{thm.max.log} applies to this family of maps for $\nu$ almost every $\tilde{x}$.

\subsubsection*{Hyperbolic billiards}
For a review of billiard maps and their statistical properties, see \cite{BSC2, Chernov}. Certain billiard transformations can be modelled by a Young tower with exponential return times, see \cite{Young}. However, such
transformations are not captured by Theorem \ref{thm.tower} due to the presence of singularities in their derivatives.
Strong Borel Cantelli lemmas have been established for such transformations in \cite{HNPV}, and 
(A1) holds.  (A2) holds for all $\tilde{x}$ as the invariant measure is a volume measure. Hence Case 1 of Theorem \ref{thm.max.log} applies to this family of maps for all $\tilde{x}\in \XX$.

\subsubsection*{Lorenz maps}
We consider the family of two-dimensional Poincar\'e return maps associated to 
the geometric Lorenz flow, \cite{GW}. For such a system $(f,\XX,\nu)$, the set
$\XX$ is a compact planar section in $\mathbb{R}^2$ (transverse to the Lorenz flow), and $\nu$ is an 
ergodic SRB measure. The hyperbolic properties of these maps are described in \cite{GP}, where it is shown 
that condition (A1) holds with $\Theta(n)=O(\theta^{n}_0)$ for some $\theta_0<1$. In 
\cite{Licheng} extreme distributional limits are derived, and condition (A2) is shown to holds for all $\tilde{x}\in\XX$.  Hence Case 1 of Theorem \ref{thm.max.log} applies to the family of Lorenz maps for all $\tilde{x}\in \XX$.

\subsubsection*{Intermittent maps}
As an example, one class of intermittent type maps $f:[0,1]\to[0,1]$ take the form
\begin{equation}\label{eq.intermittent}
f(x) =
\begin{cases}
x(1 + 2^{\alpha} x^{\alpha}) &\text{if } 0 \le x < \frac{1}{2};\\
2x-1 &\text{if } \frac{1}{2} \le x \le 1,
\end{cases}
\end{equation}
where $\alpha\in(0,1)$. Strong Borel Cantelli results are discussed in \cite{HNPV}, and see also
\cite{Gouezel, Kim}. It is shown that (A1) applies, with $\Theta(n)=O\left(n^{1-\frac{1}{\alpha}}\right)$, 
In particular $\nu$ is absolutely continuous with respect to Lebesgue measure with a density in $L^{p}(\mathrm{Leb})$ for some $p>1$, and hence (A2) holds for some $\delta>0$. Hence Case 2 of Theorem \ref{thm.max.abs.cts} applies to this family of maps for all $\tilde{x}\in \XX$.

\section{Refined estimates on almost sure growth rates of $M_n$ for non-uniformly hyperbolic systems.}\label{sec.srt}
In the previous sections, we assumed just decay of correlations (A1), and regularity of the invariant measure (A2).
We make the following assumption (A3) concerning so called \emph{short return times} to the distinguished point $\tilde{x}$, see for example \cite{HNPV}.
This allows us to obtain stronger Borel-Cantelli results for nested balls based at $\tilde{x}$, and hence we more closely approximate the i.i.d. case. The short return times assumption has been
shown to hold for $\nu$ a.e. $\tilde{x}\in \XX$ for a variety of non-uniformly hyperbolic systems.

\begin{enumerate}
\item[(A3)]{\bf (Lack of Short Return Times  to $\tilde{x}$).}
There exists $\epsilon_1>0$ such that if $\{B(x,r_n)\}$ is a sequence of balls with 
$\sum_n\nu(B(x,r_n))=\infty$ and $\nu(B(x,r_n))=O(n^{-1+\epsilon_1})$, then either:
\begin{enumerate}
\item[(A3a)] There exists $\alpha>0$
and $\gamma>0$ such that for $\nu$-a.e. $x\in\XX$:
\begin{equation}\label{eq.short1}
\nu\left(B(x,u_n)\cap f^{-k}B(x,u_n)\right)\leq \nu\left(B(x,u_n)\right)^{1+\alpha},
\end{equation}
for all $k=1,\ldots,g(n)$ with $g(n)=n^{\gamma}$; or
\item[(A3b)] There exists $\alpha>0$ such that equation \eqref{eq.short1} holds  but with $g(n)=(\log n)^{\gamma}$ 
for some $\gamma>1$.
\end{enumerate}
\end{enumerate}
Conditions (A3a) and (A3b) (collectively denoted as (A3)) give  restrictions on the recurrence properties of $\tilde{x}\in\XX$. For a broad class of systems, condition (A3) is shown to hold for $\nu$-a.e. $\tilde{x}\in\XX$. The exceptional set of points
includes periodic points (for example). However we believe that the almost sure behavior of $M_n$ will not depend upon the validity of the 
lack of short returns, rather it is just an artefact of our method of proof.
The following proposition establishes strong-Borel Cantelli results for general dynamical systems satisfying hypotheses (A1)-(A3).
\begin{prop}\label{prop.bc.srt}
Suppose that $(f,\XX,\nu)$ is a measure preserving system with ergodic
SRB measure $\nu$, and that the local dimension $d_{\nu}$ exists
at $\tilde{x}\in\XX$ and (A2) holds for $\tilde{x}$. We have the following cases.
\begin{enumerate}
\item 
Suppose that $\Theta(n)=O(\theta^{n}_0)$ for some $\theta$, 
and (A1) holds. Moreover, suppose that
$B_n=B(\tilde{x},r_n)$ is a sequence of decreasing balls about $\tilde{x}$ with 
$\limsup_n \nu(B_n)(\log n)^{\frac{\sigma}{\zeta}}<\infty$ 
for some $\sigma>1$, and (A3b) holds at $\tilde{x}$ for $\gamma>1$. If $S_n(x)=\sum_{k\leq n}1_{B_k}(f^{k}(x))$, $E_n=\sum_{k\leq n} \nu(B_k)$ then 
\begin{equation}\label{eq.srt.exp}
S_n(x) =E_n  +
  O (E^{1/2}_{n} \log^{3/2+\epsilon}E_{n},
  ),  
  \end{equation}
\item Suppose that $\Theta(n)=O(n^{-\zeta})$ and (A1) 
holds together with (A3a) at $\tilde{x}$, with $\gamma>(2\delta^{-1}+1)/(\zeta-1)$. 
Moreover suppose that
$B_n=B(\tilde{x},r_n)$ is a sequence of decreasing balls about $\tilde{x}$ with 
$\limsup_n \nu(B_n)n^{\sigma/\alpha}<\infty$, for some
$$\sigma>\frac{2\delta^{-1}+1}{\zeta-1}.$$ 
If $E_n\to\infty$ then 
\begin{equation}\label{eq.srt.poly}
S_n(x) =E_n  +
  O (E^{1/2}_{n} \log^{3/2+\epsilon}E_{n},
  ),  
\end{equation}
\end{enumerate}
\end{prop}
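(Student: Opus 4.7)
The plan is to control the variance
$\mathrm{Var}_\nu(S_n) = \sum_{j,k\le n}\bigl[\nu(B_j \cap f^{-(k-j)}B_k) - \nu(B_j)\nu(B_k)\bigr]$
and then invoke a Gál--Koksma style lemma (as used in \cite{HNPV}, cf.\ the Sprindzuk-type statements underlying Proposition~\ref{prop.bc2}) to convert a variance estimate $\mathrm{Var}_\nu(S_n)=O(E_n\,(\log E_n)^{c})$ into the a.s.\ pointwise bound $S_n(x)=E_n + O(E_n^{1/2}\log^{3/2+\epsilon}E_n)$. The work therefore reduces to producing a sufficiently sharp variance bound.

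Writing $\mathrm{Var}_\nu(S_n) = E_n + 2\sum_{j<k\le n}[\nu(B_j \cap f^{-(k-j)}B_k) - \nu(B_j)\nu(B_k)]$, I split the off-diagonal sum using the threshold $g(n)$ furnished by (A3). On the near-diagonal block $1\le k-j \le g(n)$, nestedness of the balls gives $B_k\subset B_j$, so (A3) applied at $B_j$ yields $\nu(B_j \cap f^{-(k-j)}B_k)\le \nu(B_j\cap f^{-(k-j)}B_j)\le \nu(B_j)^{1+\alpha}$; summed, this contributes $O\bigl(g(n)\sum_{j\le n}\nu(B_j)^{1+\alpha}\bigr)$, and the sizing assumptions $\nu(B_n)\lesssim (\log n)^{-\sigma/\zeta}$ (Case~1) or $\nu(B_n)\lesssim n^{-\sigma/\alpha}$ (Case~2) are calibrated precisely so that this block is absorbed into $O(E_n)$ up to log factors.

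On the far block $k-j>g(n)$, the indicators $1_{B_j}$ are not Lipschitz, so I replace them by Lipschitz bump functions $\psi_j,\psi_k$ that equal $1$ on $B_j,B_k$, vanish outside concentric balls enlarged by widths $\epsilon_j,\epsilon_k$, and satisfy $\|\psi_j\|_{\mathrm{Lip}}=O(\epsilon_j^{-1})$. Assumption (A2) controls the approximation error by $O(\epsilon_j^\delta)+O(\epsilon_k^\delta)$, while (A1) gives $|\mathrm{Cov}_\nu(\psi_j,\psi_k\circ f^{k-j})|\lesssim \Theta(k-j)(\epsilon_j\epsilon_k)^{-1}$. Optimising $\epsilon_j,\epsilon_k$ balances these two error sources and yields a per-pair bound in terms of $\Theta(k-j)^{\delta/(2+\delta)}$; summation over the far block then contributes $O(E_n)$ under (i)~exponential $\Theta$ (trivially) in Case~1, and (ii)~polynomial $\Theta(n)=O(n^{-\zeta})$ in Case~2 provided $g(n)=n^\gamma$ with $\gamma>(2\delta^{-1}+1)/(\zeta-1)$, which is exactly the assumption on $\gamma$ in (A3a).

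The main obstacle is the balancing in Case~2: the polynomial correlation rate is weak enough that both the Lipschitz-truncation loss from (A2) and the near-diagonal loss from (A3a) must be kept simultaneously below $O(E_n(\log E_n)^c)$. This is where the quantitative thresholds $\sigma>(2\delta^{-1}+1)/(\zeta-1)$ and the matching condition on $\gamma$ enter: they are precisely the conditions that make the optimised Lipschitz approximation summable on the far block while leaving the near-diagonal contribution under (A3a) still negligible. Once the variance estimate $\mathrm{Var}_\nu(S_n)=O(E_n(\log E_n)^c)$ is established in both cases, a direct application of the Gál--Koksma lemma along a suitable polynomial subsequence (interpolating by monotonicity of $S_n$) delivers \eqref{eq.srt.exp} and \eqref{eq.srt.poly}.
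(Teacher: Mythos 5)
Your strategy is the same as the paper's: bound the variance of $S_n$ by splitting the off-diagonal pairs at the threshold supplied by (A3), use (A3) on the near-diagonal block, use a Lipschitz mollification together with (A1) and (A2) on the far block, and feed the resulting bound $\theta(n)=O(E_n)$ into the Sprindzuk (G\'al--Koksma) lemma, which is exactly Proposition~\ref{prop:sprindzuk}. The one place your write-up deviates quantitatively is the far block in Case~2: balancing the (A2) error $\epsilon^{\delta}$ against $\Theta(k-j)\epsilon^{-2}$ per pair gives a decay exponent $\zeta\delta/(2+\delta)$, and summing over $k-j>n^{\gamma}$ then forces $\gamma>(2\delta^{-1}+1)/(\zeta-2\delta^{-1}-1)$, which is strictly stronger than the stated hypothesis $\gamma>(2\delta^{-1}+1)/(\zeta-1)$. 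The paper avoids this loss by fixing the mollifier width at $(k(\log k)^{2})^{-1/\delta}$ once and for all, so that the set where $\tilde f_k\neq 1_{B_k}$ is summable and is discharged by a single Borel--Cantelli application (and $\sum_k g_k=\sum_k\nu(B_k)+O(1)$), leaving only the Lipschitz norms $\|\tilde f_k\|_{\mathrm{Lip}}\le(k(\log k)^{2})^{1/\delta}$ in the covariance bound; summing $n^{2/\delta+\epsilon}(k-j)^{-\zeta}$ over $k-j>i^{\sigma}$ then yields exactly the threshold $\sigma>(2\delta^{-1}+1)/(\zeta-1)$ claimed in the statement. With that substitution your argument matches the paper's proof.
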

\begin{rmk}\label{rmk.srt}
In Case 2, the estimate is of no use  if $(2\delta^{-1}+1)/[\alpha(\zeta-1)]\geq 1.$
\end{rmk}
Proposition \ref{prop.bc.srt} has the immediate corollaries:
\begin{cor}\label{cor.srt.io}
Suppose that $(f,\XX,\nu)$ satisfies the assumptions of Proposition \ref{prop.bc.srt}, with either case applying but in Case 2 we require $\frac{\sigma}{\alpha}<1$.
Suppose that for some monotone sequence $r_n\to 0$, $\sum_n \nu(B(\tilde{x},r_n))=\infty$. 
Then for any observable function $\phi(x)=\psi(\dist(x,\tilde{x}))$ with $\psi(y)$ monotone increasing (as $y\to 0$):
$$\nu\{X_n\geq \psi(r_n),\mathrm{i.o.}\}=1.$$
\end{cor}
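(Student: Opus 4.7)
The plan is to translate the event $\{X_n \geq \psi(r_n),\,\mathrm{i.o.}\}$ into a dynamical shrinking-target event and then apply Proposition~\ref{prop.bc.srt}. Since $\psi$ is monotone decreasing in $y$, we have $X_n(x) = \psi(\dist(f^{n-1}(x), \tilde{x})) \geq \psi(r_n)$ if and only if $f^{n-1}(x) \in B_n$, where $B_n := B(\tilde{x}, r_n)$. Applying Proposition~\ref{prop.bc.srt} to the shifted sequence $B'_n := B_{n+1}$ (whose $\nu$-measures have the same divergence and decay properties as those of $B_n$) reduces the task to showing that $S'_n(x) := \sum_{k \leq n} \mathbf{1}_{B'_k}(f^k(x)) \to \infty$ almost surely, so it suffices to verify the hypotheses of Proposition~\ref{prop.bc.srt} for some nested sequence of balls sitting inside $(B_n)$.

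If the given $(r_n)$ already satisfies the decay-rate hypothesis of Proposition~\ref{prop.bc.srt} in the relevant case (the $\limsup$ bound on $\nu(B_n)$ in Case~1 or Case~2), the proposition gives $S_n = E_n + O(E_n^{1/2}\log^{3/2+\epsilon} E_n)$ almost surely, and since $E_n \to \infty$ by hypothesis we conclude $S_n \to \infty$ almost surely, which is the desired i.o.\ statement. Otherwise $\nu(B_n)$ decays too slowly, and I would construct a smaller monotone sequence $r'_n \leq r_n$ with $\nu(B(\tilde{x}, r'_n))$ equal (up to a constant) to $\min\{\nu(B_n), a_n\}$, where $a_n$ is the maximal rate permitted by the proposition --- namely $a_n = c(\log n)^{-\sigma/\zeta}$ in Case~1 and $a_n = c n^{-\sigma/\alpha}$ in Case~2. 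The extra hypothesis $\sigma/\alpha < 1$ in the corollary guarantees $\sum a_n = \infty$; monotonicity of both $\nu(B_n)$ and $a_n$ yields a monotone $r'_n$; and a short case split (either $\nu(B_n) \geq a_n$ infinitely often, or eventually $\nu(B_n) < a_n$) shows the sum $\sum \nu(B(\tilde{x}, r'_n))$ still diverges. Since $B(\tilde{x}, r'_n) \subseteq B_n$, the i.o.\ event for the primed sequence forces the i.o.\ event for $(B_n)$.

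The main --- and admittedly minor --- subtlety lies in constructing $r'_n$: I must realise prescribed $\nu$-measures by actual ball radii while keeping the resulting radii monotone. This is where assumption~(A2) enters: the Hölder-type continuity of $r \mapsto \nu(B(\tilde{x}, r))$ at $\tilde{x}$ lets me pick $r'_n$ so that $\nu(B(\tilde{x}, r'_n))$ agrees with $\min\{\nu(B_n), a_n\}$ up to a multiplicative constant, which is harmless both for the hypothesis check in Proposition~\ref{prop.bc.srt} and for the divergence of the sum. Beyond this bookkeeping, every step is a direct invocation of Proposition~\ref{prop.bc.srt}, so I do not foresee any genuinely hard obstacle.
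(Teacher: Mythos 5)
Your proposal is correct and follows essentially the same route as the paper: apply Proposition~\ref{prop.bc.srt} directly when $\nu(B_n)$ already decays at an admissible rate, and otherwise pass to nested balls $B(\tilde{x},\tilde r_n)\subset B(\tilde{x},r_n)$ with $\nu(B(\tilde{x},\tilde r_n))=\min\{\nu(B_n),a_n\}$, check that the truncated sum still diverges, and transfer the i.o.\ conclusion by inclusion of the targets. The paper's cutoff is $a_n=n^{-1+\epsilon_1}$ coming from (A3) rather than your case-specific rates, and it is equally terse about why the capped sum diverges (which does require the monotonicity of $\nu(B_n)$ together with the regular variation of $a_n$, not just your stated dichotomy), but the argument is the same.
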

\begin{proof}
From Proposition \ref{prop.bc.srt}, the corollary is  true for balls $B_n$ with 
$\nu(B_n)=O(n^{-1+\tilde\epsilon})$ for some $\tilde\epsilon>0$. The existence of $\tilde\epsilon$ follows
by (A3), and the assumptions placed on $\nu(B_n)$ in the statement of the Proposition.
We now extend the Borel-Cantelli property to all sequences $r_n$, with $\sum_n \nu(B(\tilde{x},r_n))=\infty$.
Let $\tilde{r}_n$ be a sequence such that $\tilde{r}_n=r_n$ if $\nu(B(\tilde{x},r_n))\leq n^{(-1+\epsilon_1)}$, and let
$\tilde{r_n}$ be such that $\nu(B(\tilde{x},\tilde{r}_n))=n^{(-1+\epsilon_1)}$ otherwise. The corresponding balls 
$B(\tilde{x},\tilde{r}_n)$ satisfy the (A3) conditions and the SBC property. Furthermore $B(\tilde{x},\tilde{r}_n)\subset B(\tilde{x},r_n)$, and hence  $S_n(x)>\tilde{S}_n(x)$, where 
$\tilde{S}_n(x)=\sum_{k\leq n}1_{\tilde{B}_n}(x)$ diverges to infinity (by the SBC property). 
Thus $\nu\{X_n\geq u_n,\mathrm{i.o}\}=1$ for the corresponding sequence $u_n=\psi(r_n)$, and the result follows.
\end{proof}
If (A3) holds, then we can strengthen the estimates of Theorem \ref{thm.max.log}, especially Case 2.
\begin{cor}
Suppose that $(f,\XX,\nu)$ is a measure preserving system with ergodic
SRB measure $\nu$, and that the local dimension $d_{\nu}$ exists
at $\tilde{x}\in\XX$ and (A2) holds for $\tilde{x}$. Suppose that $\Theta(n)=O(n^{-\zeta})$ and (A1) 
holds together with (A3a) at $\tilde{x}$, with $(2\delta^{-1}+1)/(\zeta-1)<\min\{\alpha,\gamma\}$. 
Consider the observable $\phi(x)=-\log(\dist(x,\tilde{x}))$. Then
\begin{equation}\label{eq.max.exp2}
\lim_{n\to\infty}\frac{M_n(x)}{\log n}=\frac{1}{d_{\nu}},\quad\textrm{a.s.}
\end{equation}
\end{cor}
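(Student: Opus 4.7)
The plan is to follow the scheme of Proposition~\ref{prop.mn.obs} (Case 1), but using the sharper strong Borel--Cantelli estimate of Proposition~\ref{prop.bc.srt} (Case 2) in place of the weaker Proposition~\ref{prop.bc2}; the (A3a) hypothesis is precisely what upgrades the lower bound so that it matches the upper bound. As in the proof of Theorem~\ref{thm.max.log}, the existence of the local dimension $d_{\nu}$ at $\tilde{x}$ means that for each $\epsilon>0$ and all large $n$ one has $\nu(B_n)^{1/(d_{\nu}-\epsilon)}\le r_n\le \nu(B_n)^{1/(d_{\nu}+\epsilon)}$, so ball radii can be converted to $\nu$-measures up to an arbitrarily small power loss.

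For the upper bound I would take $B_n=B(\tilde{x},r_n)$ with $\nu(B_n)=1/[n(\log n)^{\delta_1}]$ for some $\delta_1>1$, apply the first Borel--Cantelli lemma to the events $\{f^n(x)\in B_n\}$ via the local dimension estimate, and conclude $M_n(x)\le \psi(r_n)\le (1/d_{\nu}+\epsilon)(\log n+\delta_1\log\log n)$ eventually, giving $\limsup_n M_n/\log n\le 1/d_{\nu}$ almost surely. No use of (A3) is needed here.

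For the lower bound I would choose balls $B_n=B(\tilde{x},r_n)$ with $\nu(B_n)=(\log n)^{\beta}/n$ for some $\beta>1$ to be fixed. The hypothesis $(2\delta^{-1}+1)/(\zeta-1)<\min\{\alpha,\gamma\}$ lets me pick $\sigma$ with $(2\delta^{-1}+1)/(\zeta-1)<\sigma<\alpha$, so that $\nu(B_n)n^{\sigma/\alpha}=(\log n)^{\beta}n^{\sigma/\alpha-1}\to 0$, which together with the (A3a) condition verifies the hypotheses of Proposition~\ref{prop.bc.srt} (Case 2). That proposition then yields
\[
S_n(x)=E_n+O\bigl(E_n^{1/2}\log^{3/2+\epsilon}E_n\bigr)\qquad\text{for $\nu$-a.e.\ }x,
\]
with $E_n\sim (\log n)^{\beta+1}/(\beta+1)$. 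Setting $n_l(x)=\max\{k\le n:f^k(x)\in B_k\}$, the identity $S_{n_l}(x)=S_n(x)$ gives $E_n-E_{n_l}=O(E_n^{1/2}\log^{3/2+\epsilon}E_n)$, which after inverting the asymptotic for $E_n$ shows $\log n-\log n_l=O((\log n)^{1-(\beta+1)/2}(\log\log n)^{3/2+\epsilon})$. For any $\beta>1$ this is $o(1)$, so $n_l/n\to 1$. Applying the local dimension estimate and $M_n(x)\ge \psi(r_{n_l})$ then gives
\[
M_n(x)\ge \frac{1}{d_{\nu}+\epsilon}\bigl(\log n_l-\beta\log\log n_l\bigr)\ge \Bigl(\frac{1}{d_{\nu}}-2\epsilon\Bigr)\log n
\]
eventually, and letting $\epsilon\to 0$ yields $\liminf_n M_n/\log n\ge 1/d_{\nu}$ almost surely, closing the two-sided estimate.

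The main obstacle is bookkeeping the compatibility of the various quantifiers: one must simultaneously choose $\beta>1$, the small fluctuation $\epsilon$ in the local dimension, and $\sigma\in((2\delta^{-1}+1)/(\zeta-1),\alpha)$ so that Proposition~\ref{prop.bc.srt} applies to the chosen sequence $\nu(B_n)=(\log n)^{\beta}/n$ and the resulting error still implies $n_l\sim n$. The assumption $(2\delta^{-1}+1)/(\zeta-1)<\min\{\alpha,\gamma\}$ is precisely what guarantees that such a simultaneous choice exists.
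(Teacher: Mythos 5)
Your proposal is correct and follows essentially the same route as the paper: the paper's proof of this corollary simply invokes Case 2 of Proposition~\ref{prop.bc.srt} for balls with $\nu(B_n)=(\log n)^{\beta}/n$ and then repeats the argument of Proposition~\ref{prop.Mn.bounds}, which is exactly what you carry out (with the details of the $n_l$ inversion and the local-dimension conversion filled in correctly). The only nit is that when choosing $\sigma$ you should take $\sigma<\min\{\alpha,\gamma\}$ rather than just $\sigma<\alpha$, since the proof of Proposition~\ref{prop.bc.srt} also needs $\sigma<\gamma$ to apply (A3a) on the near-diagonal block; the hypothesis of the corollary makes this possible, as you note at the end.
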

\begin{proof}
From Case 2 of Proposition \ref{prop.bc.srt}, equation \eqref{eq.srt.poly} applies for a sequence of balls $B_n$ 
satisfying $\nu(B_n)=(\log n)^{\beta}/n$, for any $\beta>0$. For this sequence we can repeat the proof
of Proposition \ref{prop.Mn.bounds} to get the required result.
\end{proof}

For systems having quantitative recurrence estimates we can also establish non-existence of an almost sure
growth rate for certain observables. This extends the results of Theorem \ref{thm.nolimit}. In particular we do
not require the strong assumption of decay of correlations of BV against $L^1$. 
\begin{cor}\label{cor.srt.nolimit}
Suppose that $(f,\XX,\nu)$ satisfies the assumptions of Proposition \ref{prop.bc.srt}, and in addition suppose 
$\nu$ is absolutely continuous with respect to Lebesgue measure. Consider the observable
$\phi(x)=\dist(x,\tilde{x})^{-\alpha}$ for $\alpha>0$. Then for any monotone sequence $u_n\to\infty$ we have:
\begin{equation}\label{eq.mn.nolimit.cor}
\nu\left(\limsup_{n\to\infty}\frac{M_n(x)}{u_n}=0\right)=1,\;\textrm{or}\;\nu\left(
\limsup_{n\to\infty}\frac{M_n(x)}{u_n}=\infty\right)=1.
\end{equation}
\end{cor}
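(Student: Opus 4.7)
The plan is to mimic the proof of Theorem \ref{thm.nolimit} almost verbatim, substituting the BV versus $L^1$ input (which was routed through Lemma \ref{lem.nolimit1}) with Corollary \ref{cor.srt.io}. Under the hypotheses of Proposition \ref{prop.bc.srt}, Corollary \ref{cor.srt.io} already guarantees that for any monotone $r_n \to 0$ with $\sum_n \nu(B(\tilde{x}, r_n)) = \infty$ we have $\nu\{X_n \geq \psi(r_n),\,\mathrm{i.o.}\} = 1$; combined with the classical identity $\nu\{M_n \geq u_n,\,\mathrm{i.o.}\} = \nu\{X_n \geq u_n,\,\mathrm{i.o.}\}$ for monotone $u_n$ (see \cite[Theorem 4.2.1]{Galambos}), this supplies exactly the lower-class input that Lemma \ref{lem.nolimit1} provided in the earlier argument.

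Given a monotone $u_n \to \infty$, I would set $r_n := u_n^{-1/\alpha}$, so that $u_n = \psi(r_n)$ and $r_n \to 0$ monotonically. The argument then splits according to whether $\sum_n \nu(B(\tilde{x}, r_n))$ converges (upper class) or diverges (lower class).

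In the convergent case, the first Borel--Cantelli lemma yields $\nu\{X_n \geq u_n,\,\mathrm{i.o.}\} = 0$, hence $\nu\{M_n \geq u_n,\,\mathrm{i.o.}\} = 0$. To strengthen this into $\limsup_n M_n/u_n = 0$ almost surely, I exploit absolute continuity of $\nu$, which gives $\nu(B(\tilde{x}, tr)) \sim t\,\nu(B(\tilde{x}, r))$ as $r \to 0$ for every fixed $t > 0$. Thus $\sum_n \nu(B(\tilde{x}, tr_n)) < \infty$ for each $t > 0$, and the first Borel--Cantelli argument repeats to give $\nu\{M_n \geq t^{-\alpha} u_n,\,\mathrm{i.o.}\} = 0$ for every $t > 0$. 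Taking $t \to 0$ along a countable sequence then yields $\limsup_n M_n/u_n = 0$ almost surely.

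In the divergent case, the same scaling preserves divergence: $\sum_n \nu(B(\tilde{x}, tr_n)) = \infty$ for every $t > 0$. Applying Corollary \ref{cor.srt.io} together with the Galambos identity produces $\nu\{M_n \geq t^{-\alpha} u_n,\,\mathrm{i.o.}\} = 1$ for every $t > 0$, so intersecting over a countable sequence $t \to \infty$ yields $\limsup_n M_n/u_n = \infty$ almost surely. The only genuinely new input relative to Theorem \ref{thm.nolimit} is Corollary \ref{cor.srt.io}, and no step of the dichotomy presents an obstacle: the purpose of this corollary is precisely to record that the strong BV versus $L^1$ hypothesis of Theorem \ref{thm.nolimit} can be replaced by the quantitative short-return assumption (A3).
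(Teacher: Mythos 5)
Your proposal is correct and follows essentially the same route as the paper, which likewise proves this corollary by substituting Corollary \ref{cor.srt.io} for Lemma \ref{lem.nolimit1} and then rerunning the scaling dichotomy from the proof of Theorem \ref{thm.nolimit} using absolute continuity of $\nu$. The details you supply (the correspondence $r_n=u_n^{-1/\alpha}$, the preservation of summability or divergence under $r_n\mapsto t r_n$, and the Galambos identity for monotone $u_n$) are exactly the ones the paper leaves implicit.
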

The proof follows immediately from Corollary \ref{cor.srt.io} above, and the method of proof of Theorem \ref{thm.nolimit}.

As an application, Corollary \ref{cor.srt.nolimit} applies to is the family of intermittent maps discussed
in Section \ref{sec.applications}, namely equation \eqref{eq.intermittent}.

\subsubsection*{Application: Rank 1 Young towers}
The assumption (A3) on lack of  short return times allows us to deduce a broader class of sequences $r_n$ for which
the corresponding balls $B_n$ form Borel Cantelli sequences. The following result resolves a question posed
in \cite[Section 7]{HNPV}. 

\begin{thm}\label{thm.tower2}
Suppose that $f:\XX\to \XX$ is a $C^2$ diffeomorphism modelled by a Young tower with SRB measure $\nu$ and having exponential tails. Suppose that $\sum_n \nu(B(\tilde{x},r_n))=\infty$, for some monotone sequence $r_n\to 0$,
and $\tilde{x}\in\XX$. 
Consider the observable function $\phi(x)=\psi(\dist(x,\tilde{x}))$ with $\psi(y)$ monotone decreasing,
and $\lim_{y\to 0}\psi(y)=\infty$. Then for $\nu$-a.e. $\tilde{x}\in\XX$:
$$\nu\{X_n\geq \psi(r_n),\mathrm{i.o.}\}=1.$$
\end{thm}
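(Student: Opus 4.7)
\textbf{Proof proposal for Theorem \ref{thm.tower2}.}
The plan is to invoke Corollary \ref{cor.srt.io}, which already packages the passage from the strong Borel--Cantelli estimates of Proposition \ref{prop.bc.srt} to the ``infinitely often'' statement for an arbitrary sequence $r_n\to 0$ with $\sum_n\nu(B(\tilde x,r_n))=\infty$. So the real task is to verify, for $\nu$-a.e. $\tilde x\in\XX$, the hypotheses that feed into Proposition \ref{prop.bc.srt}: existence of the local dimension $d_\nu(\tilde x)$, (A1), (A2), and (A3).

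First I would record the inputs that are essentially standard for Young towers with exponential return-time tails. Exponential decay of correlations for Lipschitz observables (i.e.\ (A1) with $\Theta(n)=O(\theta_0^n)$) follows from \cite{Young}; the local dimension $d_\nu$ exists and is constant $\nu$-a.e.\ via the Young dimension formula; and (A2) holds for $\nu$-a.e. $\tilde x$ as already established in the proof of Theorem \ref{thm.tower} (Section~\ref{sec.appendix}). Since we are in the exponential-decay regime we will use Case~1 of Proposition~\ref{prop.bc.srt}.

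The main obstacle, and the step that deserves the most care, is verifying (A3b) (lack of short returns) for $\nu$-a.e.\ $\tilde x$. The plan is to work on the tower: unstable leaves are uniformly expanded with bounded distortion, and the reference measure disintegrates absolutely continuously along unstable leaves with densities bounded above and below. For fixed scale $r$ and $k\le (\log n)^{\gamma}$ one estimates
\[
\nu\bigl(B(\tilde x,r)\cap f^{-k}B(\tilde x,r)\bigr)
\]
by covering $B(\tilde x,r)$ by a controlled number of unstable-leaf pieces and using uniform expansion to show that each $k$-preimage intersects $B(\tilde x,r)$ with measure $\le\nu(B(\tilde x,r))^{1+\alpha}$, \emph{except} on a bad set of $\tilde x$'s whose $\nu$-measure decays fast in $r$. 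Summing the bad sets over dyadic scales $r=2^{-j}$ and $k\le (\log n)^{\gamma}$, and using the exponential tails to control the measure of points that escape into the tail before expansion acts, an application of the first Borel--Cantelli lemma removes a $\nu$-null set outside of which (A3b) holds. This is the technically heaviest step; everything else is bookkeeping.

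With (A1), (A2), (A3b) and existence of $d_\nu$ in hand for $\nu$-a.e.\ $\tilde x$, Corollary~\ref{cor.srt.io} applies directly. Given the monotone $r_n\to 0$ with $\sum_n\nu(B(\tilde x,r_n))=\infty$, define $\tilde r_n=r_n$ when $\nu(B(\tilde x,r_n))\le n^{-1+\epsilon_1}$ and otherwise shrink to $\tilde r_n$ so that $\nu(B(\tilde x,\tilde r_n))=n^{-1+\epsilon_1}$; the truncated sequence still has divergent measure-sum and now satisfies the small-ball hypothesis of Proposition~\ref{prop.bc.srt}, so the corresponding $\tilde S_n(x)\to\infty$ almost surely. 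Since $B(\tilde x,\tilde r_n)\subset B(\tilde x,r_n)$, the counting function $S_n(x)\ge \tilde S_n(x)$ also diverges a.s., which gives $\nu\{f^{n-1}x\in B(\tilde x,r_n),\ \mathrm{i.o.}\}=1$. Translating via $\{X_n\ge\psi(r_n)\}=\{f^{n-1}x\in B(\tilde x,r_n)\}$ (using monotonicity of $\psi$) yields the desired conclusion $\nu\{X_n\ge\psi(r_n),\ \mathrm{i.o.}\}=1$.
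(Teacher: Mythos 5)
Your overall architecture matches the paper's: reduce everything to Corollary \ref{cor.srt.io}, note that (A1) with exponential $\Theta$ comes from \cite{Young}, that the annulus regularity comes from Lemma \ref{lem.annulus.tower} (note this is only a \emph{weakened} (A2), valid for $\epsilon<r^{\hat\delta}$ and $\nu$-a.e.\ $x$, which forces a finer Lipschitz approximation in Proposition \ref{prop.bc2} --- a point worth flagging), and that the entire weight of the theorem rests on verifying (A3b) for $\nu$-a.e.\ $\tilde x$. Your final assembly --- truncating $r_n$ to $\tilde r_n$ with $\nu(B(\tilde x,\tilde r_n))=n^{-1+\epsilon_1}$, using $B(\tilde x,\tilde r_n)\subset B(\tilde x,r_n)$ and divergence of $\tilde S_n$ --- is exactly the argument in the proof of Corollary \ref{cor.srt.io}, so that part is fine.

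The genuine gap is in your verification of (A3b). You propose to prove, by covering $B(\tilde x,r)$ with unstable-leaf pieces and invoking uniform expansion and bounded distortion, that $\nu(B(\tilde x,r)\cap f^{-k}B(\tilde x,r))\le\nu(B(\tilde x,r))^{1+\alpha}$ for all $k\le(\log n)^{\gamma}$ outside a bad set of centers of small measure. That single sentence \emph{is} the hard theorem: for small $k$ there is essentially no expansion available, points near periodic orbits of low period genuinely violate the bound, and for a diffeomorphism modelled by a tower the expansion only acts after the (unbounded) return time, so ``exponential tails control the points that escape into the tail before expansion acts'' is not a mechanism but a restatement of what must be proved. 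The paper does not attempt this from scratch; it imports \cite[Proposition 4.1]{CC} (the short-return estimate $\sum_{\ell\le|\log r|^{\gamma_1}}\nu(B_r(x)\cap T^{-\ell}B_r(x))\le O(1)r^{\alpha_1}\nu(B_r(x))$ off a set of measure $O(r^{s_1})$) together with \cite[Lemma A.2]{CC} (a doubling-type estimate off a set $\mathcal{E}_{r,s}$ of measure $O(r^s)$), then runs Borel--Cantelli along $a_n=n^{-a}$, interpolates to all intermediate radii using the doubling bound and $a_{n+1}/a_n\to 1$, and converts $r^{\alpha_1-s}\nu(B_r(x))$ into $\nu(B_r(x))^{1+\alpha}$ via the local dimension. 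Your sketch omits both the source of the short-return estimate and the interpolation from a discrete sequence of scales to arbitrary radii, and the latter is needed because (A3b) must hold for whatever radii the sequence $\tilde r_n$ produces. To close the gap you should either cite the Chazottes--Collet estimates as the paper does, or accept that the leaf-covering argument is a substantial independent proof that cannot be waved through.
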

To prove this theorem we check that condition (A3b) holds for $\nu$-a.e. $x\in\XX$.
We then achieve SBC results for the sequence of balls $B_n=B(\tilde{x},r_n)$ with 
$\limsup_n \nu(B_n)(\log n)^{\frac{\sigma}{\zeta}}<\infty$ for some $\sigma>1$.
We then apply Corollary \ref{cor.srt.io} to balls shrinking around such points $\nu$-a.e. $x\in\XX$.
The verification of (A3b) is a technical argument utilizing results in~\cite{CC},
and is postponed to Section~\ref{sec.appendix}. An immediate example to which  this theorem applies is the H\'enon map, as discussed in Section~\ref{sec.applications}.

\section{Discussion.}

We considered two types of function $\phi_1 (x)=-\log d (x,\tilde{x})$ and $\phi_2 (x)=d(x,\tilde{x})^{-\alpha}$. If $X_i$ is a sequence of iid random variables with the same 
distribution function as $\phi_1$ then $M_n/\log n\to 1$ almost surely, while in the case of iid random variables with the distribution function of $\phi_2$ there is no almost sure limit for 
$M_n/u_n$ for any sequence of constants $u_n$. Given sufficient control on dynamical Borel-Cantelli lemmas we were able to show the same phenomena in the deterministic case. In particular, unlike distributional extreme limits the almost sure behavior of $M_n$ does not depend (in many cases) on  the periodicity or not of $\tilde{x}$. Some of our results made assumptions on the recurrence properties
associated to the point $\tilde{x}$ (such as assumption (A3)) but these assumptions were to obtain estimates on dynamical Borel Cantelli lemmas and we believe are an artifact of our method of proof.
In other words we have no reason to believe that if (A3) does not hold at a point $\tilde{x}$  then the almost sure behavior of $M_n$  would differ from the case in which it does. But this is conjecture.

Natural questions include:  can we determine  corresponding almost sure growth results for observables other than those which are functions of distance to a distinguished point? or whose
level set geometry is more complex? This problems requires an understanding of BC and SBC results for more general shrinking targets $B_n$. Notice that our results in Section \ref{sec.results} required the checking of (A1) and (A2). However, for general SRB measures, we would need an extended version of (A2) that is adapted to the geometry of $B_n$. In general, such a condition would be difficult to check and perhaps news ideas are needed.

\section{Proof of Propositions \ref{prop.bc2} and \ref{prop.bc.srt}}\label{sec.prop.proofs}
In this section we prove Propositions \ref{prop.bc2} and \ref{prop.bc.srt}. The main ideas are presented in
\cite{HNPV}, but in this case we need refined estimates on the error terms that appear in the strong Borel Cantelli
results. In the following section we recall some key constructions and estimates that will be used in the proof
of the propositions.
\subsection{Asymptotic estimates for sequences}
To get SBC results together with refined asymptotic estimates the key result we use is the following:
\begin{prop}[\cite{Sprindzuk}] \label{prop:sprindzuk}
  Let $(\Omega,\mathcal{B},\mu)$ be a probability space and let $f_k
  (\omega) $, $(k=1,2,\ldots )$ be a sequence of non-negative $\mu$
  measurable functions and $g_k$, $h_k$ be sequences of real numbers
  such that $0\le g_k \le h_k \le 1$, $(k=1,2, \ldots,)$.  Suppose there exist $C>0$ such that 
  \begin{equation} \label{eq:sprindzuk}
    \int \left(\sum_{m<k\le n}( f_k (\omega) - g_k)
    \right)^2\,d\nu \le C \sum_{m<k \le n} h_k
  \end{equation}
  for arbitrary integers $m <n$. Then for any $\epsilon>0$
  \[
  \sum_{1\le k \le n} f_k (\omega) =\sum_{1\le k\le n} g_k (\omega)  +
  O (\theta^{1/2} (n) \log^{3/2+\epsilon} \theta (n)
  )
  \]
  for $\nu$ a.e.\ $\omega \in \Omega$, where $\theta (n)=\sum_{1\le k
    \le n} h_k$.
\end{prop}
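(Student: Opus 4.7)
The plan is to prove this by the classical Gal--Koksma / Rademacher--Menshov scheme. Set $S_N(\omega) = \sum_{k \le N}(f_k(\omega) - g_k)$ and $T(N) = \theta(N) = \sum_{k \le N} h_k$, so that the hypothesis reads $\int (S_n - S_m)^2\, d\nu \le C(T(n) - T(m))$ for all $m < n$.

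First I would extract a geometric subsequence $(N_j)$ with $T(N_j) \asymp 2^j$, obtained by taking $N_j$ to be the smallest index with $T(N_j) \ge 2^j$ (and padding trivially if $T$ is eventually bounded, in which case the conclusion is vacuous). Chebyshev's inequality applied to $S_{N_j}$ at the threshold $\lambda_j = T(N_j)^{1/2} (\log T(N_j))^{1/2+\epsilon/2}$ yields
\[
\nu\{|S_{N_j}| > \lambda_j\} \le \frac{C\, T(N_j)}{\lambda_j^2} = O(j^{-(1+\epsilon)}),
\]
which is summable in $j$, so the first Borel--Cantelli lemma gives $|S_{N_j}| = O(T(N_j)^{1/2} (\log T(N_j))^{1/2+\epsilon/2})$ for $\nu$-a.e.\ $\omega$.

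The main task is then to interpolate between consecutive subsequence indices, and the key tool is a Rademacher--Menshov style maximal inequality derived from the hypothesis: by dyadically subdividing each block $(N_{j-1}, N_j]$ into at most $\lceil \log_2(T(N_j) - T(N_{j-1})) \rceil \asymp j$ sub-blocks balanced according to the partial sums of $h_k$, expressing any partial sum $S_n - S_{N_{j-1}}$ as a telescoping sum of at most $j$ such sub-block contributions, and applying Cauchy--Schwarz followed by the given variance bound block-by-block, one obtains
\[
\int \max_{N_{j-1} < n \le N_j} (S_n - S_{N_{j-1}})^2 \, d\nu \le C' (\log T(N_j))^2 (T(N_j) - T(N_{j-1})).
\]
A second Chebyshev and Borel--Cantelli pass, now at threshold $T(N_j)^{1/2} (\log T(N_j))^{1 + \epsilon/2}$, controls the oscillation by $O(T(N_j)^{1/2} (\log T(N_j))^{1+\epsilon/2})$ almost surely. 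Combining with the subsequence bound and noting that for $N_{j-1} < n \le N_j$ we have $T(n) \asymp T(N_j)$ yields $|S_n| = O(T(n)^{1/2} (\log T(n))^{3/2+\epsilon})$ for $\nu$-a.e.\ $\omega$.

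The main obstacle is the Rademacher--Menshov maximal inequality: the subsequence Chebyshev step alone delivers only $\log^{1/2+\epsilon}$, and the additional $\log$ factor in the stated conclusion comes entirely from the dyadic decomposition. The subtle point is to perform that decomposition with respect to the weights $h_k$ rather than with respect to the index $k$, so that the resulting estimate is in terms of $\log T$ and matches the form of the claim; once this is in place, the remainder is routine variance accounting and Borel--Cantelli.
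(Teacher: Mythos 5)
The paper offers no proof of this proposition: it is quoted verbatim, with attribution, from Sprindzuk's monograph (it is the Gal--Koksma lemma), so your attempt can only be measured against the classical argument there. That argument is exactly the scheme you describe: a Chebyshev--Borel--Cantelli pass along a subsequence on which $\theta$ essentially doubles, plus a Rademacher--Menshov maximal inequality obtained by dyadic subdivision balanced with respect to the weights $h_k$, to control the oscillation in between. Your identification of the weight-balanced subdivision as the crux, and the resulting bound $\int \max_{N_{j-1}<n\le N_j}(S_n-S_{N_{j-1}})^2\,d\nu \le C'(\log T(N_j))^2\,(T(N_j)-T(N_{j-1}))$, are correct (one stops subdividing once a block has $\theta$-mass at most one, so that the depth is $O(\log T(N_j))$ and the hypothesis bounds the leftover partial piece by $O(1)$ in $L^2$).

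The one step that fails as written is the second Chebyshev/Borel--Cantelli pass. With your threshold $\lambda_j=T(N_j)^{1/2}(\log T(N_j))^{1+\epsilon/2}$ the maximal inequality gives
\[
\nu\Bigl\{\max_{N_{j-1}<n\le N_j}\bigl|S_n-S_{N_{j-1}}\bigr|>\lambda_j\Bigr\}\;\le\; C'\,\frac{(\log T(N_j))^{2}\,\bigl(T(N_j)-T(N_{j-1})\bigr)}{T(N_j)\,(\log T(N_j))^{2+\epsilon}}\;\asymp\; j^{-\epsilon},
\]
which is not summable when $\epsilon\le 1$, whereas the proposition is asserted for every $\epsilon>0$; so Borel--Cantelli does not apply in exactly the range of $\epsilon$ that matters. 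The repair is simply to raise the oscillation threshold to $T(N_j)^{1/2}(\log T(N_j))^{3/2+\epsilon}$, which turns the series into $\asymp\sum_j j^{-1-2\epsilon}<\infty$; this term then dominates your (correct) subsequence bound and produces precisely the exponent $3/2+\epsilon$ of the statement. Indeed, the $(\log T)^{2}$ loss in the maximal inequality is exactly why this method cannot do better than $\log^{3/2+\epsilon}$, so your intermediate claim of an $O(T^{1/2}(\log T)^{1+\epsilon/2})$ total bound --- strictly stronger than Sprindzuk's conclusion --- should have been a warning sign. (A last cosmetic point: if $\theta(n)$ stays bounded the conclusion is not vacuous, it asserts $S_n=O(1)$ a.e.; but this follows from the same two-step argument, since $S_n$ is then Cauchy in $L^2$ with summable block variances.)
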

Recall that $S_n (x)=\sum_{j=0}^{n-1} 1_{B_j}(x)$ and $E_n=\sum_{j=0}^{n-1} \nu (B_j)$, we will apply
the proposition to the case where $f_k(x)=1_{B_k}(x)$ and $g_k=\nu(B_k)$, and obtain an estimate for $h_k$
in terms of $\nu(B_k)$, In fact we show that there is a constant $\beta_1<2$ such that
$\theta(n)\leq E^{\beta_1}_n$. It follows that 
for $\nu$-a.e. $x\in\XX$:
  \begin{equation}
  S_n(x) =E_n  +
  O (E^{\beta'}_{n}),
  \end{equation}
for some $\beta'<1$. We try to optimize the constant $\beta'$ in various cases.

\paragraph{Lipschitz approximation of $f_k$.} For the indicator functions $f_k$ defined above,
we would like to apply condition (A1) to estimate the expectations $E(f_if_j)$ (for various $i,j\leq n$). Here
$E(\varphi)=\int \varphi \, d\nu$ for any integrable function $\varphi \in L^1 (\nu)$.

Decay of correlations as stated in condition (A1), requires the observable class to be Lipschitz. 
The function $f_k$, as chosen when applying Proposition \ref{prop:sprindzuk} will be an indicator functions on $B_k$.
Thus we will need to work with a Lipschitz approximation $\tilde{f}_k$ of $f_k$ in order to use (A1). 
The measure of the set of points where $\tilde{f}_k(x)\neq f_k(x)$ will need to be controlled, as this will use
condition (A2). Formally, for each $k$ let $\tilde{f}_k$ be a Lipschitz function such that
$\tilde{f}_k (x)=1_{B_k} (x)$ if $x\in B_k$, $\tilde{f}_k (x)=0$ if
$d(B_k, x)>(k(\log k)^2)^{-1/\delta}$, $0\le \tilde{f}_k \le 1$ and
$\|\tilde{f}_k\|_{\mathrm{Lip}} \le (k(\log k)^{2})^{1/\delta}$.  Clearly we
may construct such functions by linear interpolation of $1_{B_k}$ and
$0$ on a region $r \le d(p_k, x)\le r+ (k(\log k)^2)^{-1/\delta}$.

In Proposition \ref{prop:sprindzuk} we will take $f_k =\tilde{f}_k\circ T^k (x)$, and
$g_k=E(\tilde{f}_k)$. The constants $h_k$ will be
chosen later.

Note that for $\nu$ a.e.\ $x\in \Omega$, $f_i (x) = 1_{B_i} (T^i x)$
except for finitely many $i$ by the Borel--Cantelli lemma as $\nu (x:
f_i (x) \not = 1_{B_i} (T^i x))= \nu (x: r< d (T^i x, p_i) < r+
(i(\log i)^2)^{-1/\gamma}) < (i(\log i)^2)^{-1}$ by assumption (A2).
Furthermore $\sum_k \nu (B_k)=\sum_k g_k +O(1)$.

\paragraph{General estimates for sequences.}
The following result, which is proved in \cite[Lemma 3.1]{HNPV} will be of use in quantifying the asymptotics of
sums of various weighted sequences. For strictly positive sequences, we write $a_n\prec b_n$ if 
$\lim_{n\to\infty}a_n/b_n=0$
\begin{lemma}\label{lem.sequence} (I) Let $\beta\in(0,1)$ and
$a_j\geq cj^{-\beta}$ be a sequence of real numbers. Then for any $\sigma>0$ and
$0<\eta<\frac{1-\beta}{1-\beta+\sigma}$ we have:
\begin{equation}
\left(\sum_{j=1}^{n}j^{\sigma}a_j\right)^{\eta}\prec\sum_{j=1}^{n}a_j.
\end{equation}
(II)
Let $a_j\geq c\frac{(\log j)^{\beta}}{j}$, $(c>0)$, be a sequence of real numbers, where $\beta>0$. Then for any $\sigma>0$,
and $0<\eta<\frac{1+\beta}{1+\beta+\sigma}$ we have for all $n\geq 1$:
\begin{equation}
\left(\sum_{j=1}^{n}a_j(\log j)^{\sigma}\right)^{\eta}\prec\sum_{j=1}^{n}a_j.
\end{equation}
\end{lemma}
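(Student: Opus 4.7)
The plan is to reduce both parts to an elementary monotone weight-extraction followed by a direct exponent-counting argument against the lower bound on $A_n := \sum_{j=1}^{n} a_j$ supplied by the hypothesis.

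For part (I), I would start with the trivial but decisive observation that $j \le n$ for every term in the sum, so
\begin{equation*}
\sum_{j=1}^{n} j^{\sigma} a_j \;\le\; n^{\sigma} \sum_{j=1}^{n} a_j \;=\; n^{\sigma} A_n.
\end{equation*}
Next I would use the hypothesis $a_j \ge c j^{-\beta}$ with $\beta \in (0,1)$ together with the standard integral comparison to produce a constant $c' > 0$ such that $A_n \ge c' n^{1-\beta}$ for all sufficiently large $n$. Since $\eta < 1$, the exponent $\eta - 1$ is negative, so $A_n^{\eta-1} \le (c')^{\eta-1} n^{(1-\beta)(\eta-1)}$. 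Combining,
\begin{equation*}
\frac{\bigl(\sum_{j=1}^{n} j^{\sigma} a_j\bigr)^{\eta}}{A_n} \;\le\; n^{\sigma \eta}\, A_n^{\eta-1} \;\le\; (c')^{\eta-1}\, n^{(1-\beta+\sigma)\eta - (1-\beta)}.
\end{equation*}
The exponent is strictly negative precisely when $\eta < (1-\beta)/(1-\beta+\sigma)$, which is exactly the stated range, so the ratio tends to $0$, i.e.\ $(\sum j^{\sigma} a_j)^{\eta} \prec A_n$.

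Part (II) is handled by the same two-step procedure with $\log n$ playing the role previously played by $n$. First, because $\log j \le \log n$ for $j \le n$,
\begin{equation*}
\sum_{j=1}^{n} a_j (\log j)^{\sigma} \;\le\; (\log n)^{\sigma} A_n.
\end{equation*}
Second, the hypothesis $a_j \ge c (\log j)^{\beta} / j$ combined with the asymptotic $\sum_{j \le n} (\log j)^{\beta}/j \sim (\log n)^{1+\beta}/(1+\beta)$ gives $A_n \ge c'' (\log n)^{1+\beta}$ for all $n$ large. Plugging these bounds into $(\sum a_j (\log j)^{\sigma})^{\eta}/A_n \le (\log n)^{\sigma\eta} A_n^{\eta - 1}$ yields an exponent of $(1+\beta+\sigma)\eta - (1+\beta)$ on $\log n$, strictly negative precisely under the assumption $\eta < (1+\beta)/(1+\beta+\sigma)$.

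There is no real obstacle here beyond bookkeeping: the only thing that could go wrong is the lower bound on $A_n$, which is immediate from comparison with $\int j^{-\beta}\,dj$ or $\int (\log j)^{\beta}/j\, dj$, and the requirement $\eta < 1$, which is automatic from the stated ranges since $(1-\beta)/(1-\beta+\sigma)$ and $(1+\beta)/(1+\beta+\sigma)$ are both less than $1$. The sharpness of the threshold for $\eta$ is what dictates extracting the weight with its uniform maximum $n^\sigma$ (resp.\ $(\log n)^\sigma$) rather than using Abel summation, which would produce a strictly worse constant in the dominant term but the same exponent.
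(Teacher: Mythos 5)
Your proof is correct: the uniform weight extraction ($j^{\sigma}\le n^{\sigma}$, resp.\ $(\log j)^{\sigma}\le(\log n)^{\sigma}$) combined with the lower bounds $A_n\ge c'n^{1-\beta}$ and $A_n\ge c''(\log n)^{1+\beta}$, and the fact that $\eta-1<0$ so these lower bounds can be fed into $A_n^{\eta-1}$, gives exactly the stated thresholds for $\eta$, and the argument is valid for arbitrary sequences satisfying only the lower bound on $a_j$. Note that the paper itself supplies no proof of this lemma --- it cites \cite[Lemma 3.1]{HNPV} --- so your write-up serves as a correct, self-contained replacement for that reference, and it follows what is essentially the standard (and, as far as the cited source goes, the same) route.
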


\subsection{Proof of Proposition \ref{prop.bc2}}

We first treat the case where condition (A2) holds, $\Theta(n)=O(n^{-\zeta})$ and $\nu(B_k)\geq k^{-\beta}$
for some $\beta<1$. The proof follows \cite{HNPV} and it suffices to give the key steps that lead us
to an estimate on the constant $\beta'$ as specified in Proposition \ref{prop.bc2}. 
A rearrangement of terms in \eqref{eq:sprindzuk} means that it suffices to show that there exists a $C>0$ such that
\[
\sum_{i=m}^n \sum_{j=i+1}^n E(f_i f_j)-E(f_i)E(f_j) \le C \sum_{i=m}^n h_i
\]
for arbitrary integers $n>m$ (where $h_i$ will be chosen later).
We split each sum $\sum_{j=i+1}^n E(f_i f_j)-E(f_i)E(f_j)$ into the terms 
$$
 I_i(n):=\sum_{j=i+1}^{i+\Delta} E(f_i f_j)-E(f_i)E(f_j),\;\textrm{and,}\;
II_i(n):=\sum_{j=i+\Delta+1}^{n} E(f_i f_j)-E(f_i)E(f_j).
$$
We shall put $\Delta=[i^\sigma]$ (where $[\cdot]$ denotes integer part), and specify $0<\sigma<1$ later in the proof. For $II_i(n)$, we can bound
this by a function $h^{II}_i(n)$ with
\begin{equation*}
\begin{split}
h^{II}_i(n) &=\sum_{j=i+[i^\sigma]}^{\infty} E(f_i f_j)-E(f_i)E(f_j)\\
&\leq \sum_{j=i+[i^\sigma]}^{\infty}\|\tilde{f}_i\|_{\mathrm{Lip}}\|\tilde{f}_j\|_{\mathrm{Lip}}\Theta(j-i)\\
&\leq \sum_{k=0}^{\infty} O(1)\left[(i+[i^\sigma]+k)(\log^2(i+[i^\sigma]+k))\right]^{\frac{1}{\delta}}
\left[i\log^2 i\right]^{\frac{1}{\delta}}\left([i^\sigma]+k \right)^{-\zeta}\\
&\leq O(1) i^{\frac{2}{\delta}-\sigma\zeta+\sigma+\epsilon},
\end{split}
\end{equation*}
where $\epsilon>0$ is arbitrary. We set $\theta_2(n)=\sum_{i=1}^{n}h^{II}_i$ and note that 
$\theta_2(n)\prec E^{\kappa}_n$ with
\begin{equation}\label{eq.bd.zeta}
\kappa=\frac{2\delta^{-1}-\sigma\zeta+\sigma+\epsilon+1}{(1-\beta)}.
\end{equation}
To bound $I_i(n)$ set
$$h^{I}_j=\sum_{i=[j-j^{\sigma}]}^{j-1}E(f_i f_j)-E(f_i)E(f_j).$$
A rearrangement of terms shows that
\begin{equation*}
\sum_{i=1}^{n}I_{i}(n) \leq \sum_{j=1}^{n}h^{I}_j
\leq \sum_{j=1}^{n}j^{\sigma}\left(\nu(B_j)+\frac{1}{j\log^2j}\right)
\leq E^{\frac{1}{\eta}}_n+E^{\frac{\sigma}{1-\beta}}_n,
\end{equation*}
where $\eta$ is specified from Lemma \ref{lem.sequence}, 
and in fact can be chosen so that $0<\eta<\frac{1-\beta}{1-\beta+\sigma}$.
If we set $\theta_1(n)=\sum_{i=1}^{n}h^{I}_i$, then we can apply Proposition \ref{prop:sprindzuk} with
$\theta(n)=\theta_1(n)+\theta_2(n)$. To get the best bound on $\beta'$ in Proposition \ref{prop.bc2} we choose 
$\sigma=\zeta^{-1}(2\delta^{-1}+\epsilon+\beta)$ to minimize $\max\{\kappa,1/\eta\}$. For this value, Proposition
\ref{prop:sprindzuk} implies that
\begin{equation*}
  S_n(x) =E_n  +
  O (E^{\beta'}_{n}.
  ),
  \end{equation*}
where $\beta'$ is the constant stated in Proposition \ref{prop.bc2}.
This completes the proof of Case 1.

\medskip

Consider now Case 2 of Proposition \ref{prop.bc2} with $\Theta(n)=O(\theta^{n}_0)$. We argue along a similar lines, and 
for some $\sigma>0$ put
$\Delta=[(\log n)^{\sigma}]$ in the definitions of $I_i(n)$ and $II_i(n)$. As before
we can bound $II_i(n)$ by $h^{II}_i(n)$, where
\begin{equation*}
\begin{split}
h^{II}_i(n) &=\sum_{j=i+[(\log i)^{\sigma}]}^{\infty} E(f_i f_j)-E(f_i)E(f_j)\\
&\leq \sum_{j=i+[(\log i)^{\sigma}]}^{\infty}\|\tilde{f}_i\|_{\mathrm{Lip}}\|\tilde{f}_j\|_{\mathrm{Lip}}\Theta(j-i)\\
&\leq O(1) i^{\frac{2}{\delta}+\epsilon}\theta^{(\log i)^{\sigma}}_0.
\end{split}
\end{equation*}
The latter estimate follows by taking $\Theta(n)=O(\theta^{n}_0)$ in (A1). 
If we set $\theta_2(n)=\sum_{i=1}^{n}h^{II}_i$, then we see that $\theta_2(n)\leq C$ for some uniform constant $C>0$. 
We now turn to $I_i(n)$, and set
$$h^{I}_j=\sum_{i=[j-(\log j)^{\sigma}]}^{j-1}E(f_i f_j)-E(f_i)E(f_j).$$
A rearrangement of terms implies that
\begin{equation*}
\sum_{i=1}^{n}I_{i}(n) \leq \sum_{j=1}^{n}h^{I}_j
\leq \sum_{j=1}^{n}(\log j)^{\sigma}\left(\nu(B_j)+\frac{1}{j\log^2j}\right)
\leq E^{\frac{1}{\eta}}_n+E^{\frac{\sigma-1}{1-\beta}}_n,
\end{equation*}
where $\eta$ satisfies $0<\eta<\frac{1+\beta}{1+\beta+\sigma}$. Given $\beta>0$, we choose $\sigma$ so that
$1<\sigma<1+\beta$, and simultaneously minimize the error term $E^{\frac{1}{\eta}}_n$.  
If we set $\theta_1(n)=\sum_{i=1}^{n}h^{I}_i$, then we can apply Proposition \ref{prop:sprindzuk} with
$\theta(n)=\theta_1(n)+\theta_2(n)$, and obtain:
\begin{equation*}
  S_n(x) =E_n  +
  O (E^{\beta'}_{n}),
  ,
  \end{equation*}
valid for any $\beta'>\frac{2+\beta}{2(1+\beta)}.$
This completes the proof of Proposition \ref{prop.bc2} 

\subsection{Proof of Proposition \ref{prop.bc.srt}}
In the case of having the short return time conditions (A3)
we follow the approach used in proving Proposition \ref{prop.bc2}, where in this case some steps simplify. We take functions $f_k(x)$ and sequences
$g_k$ as before. The only difference is that we apply the condition (A3) to estimate $h_k$. Define
$I_i(n)$ and $II_i(n)$ as before. We consider first Case 1 of Proposition \ref{prop.bc.srt}. In
the definitions of $I_i(n)$ and $II_i(n)$ we set $\Delta=[(\log i)^{\sigma}]$ for some $\sigma>1$.
The estimate $h^{II}_i$ for $II_{i}(n)$ is the same as before, and in fact leads to a constant bound
for $\theta_2(n)=\sum_{i=1}^{n} h^{II}_i.$ For $|j-i|<\Delta$, condition (A3b) leads us to the estimate:
\begin{equation}
E(f_i f_j)-E(f_i)E(f_j)<E(f_i f_j)<C\nu(B_i)^{1+\alpha}.
\end{equation}
For $\gamma$ defined in (A3b), take $\sigma<\gamma$. Then we have:
\begin{equation}
\sum_{i=1}^{n}I_{i}(n)\leq \sum_{j=1}^{n}h^{I}_j\leq \sum_{j=1}^{n}(\log j)^{\sigma}\nu(B_j)^{1+\alpha}\\
\end{equation}
This sum is bounded by $E_n$ if for given $\sigma>1$ we have 
$\nu(B_j)\leq (\log j)^{-\frac{\sigma}{\alpha}}$. It follows that
$\theta_2(n)=\sum_{i=1}^{n} h^{I}_i\leq E_n.$
Hence we can apply Proposition \ref{prop:sprindzuk} with
$\theta(n)=\theta_1(n)+\theta_2(n)$, and obtain for arbitrary $\epsilon>0$:
\begin{equation*}
  S_n(x) =E_n  +
  O (E^{\frac{1}{2}}(\log E_n)^{\frac{3}{2}+\epsilon}
  ).
  \end{equation*}
This completes the proof for Case 1.

\medskip

For Case 2, set $\Delta=[i^{\sigma}]$ for some $\sigma>0$ specified later. We estimate $h^{II}_i(n)$ 
using polynomial decay of correlations and obtain an identical estimate as in the proof of Proposition
\ref{prop.bc.srt}. If we set $\theta_2(n)=\sum_{i=1}^{n} h^{II}_i,$ then $\theta_2(n)$ is uniformly bounded
provided $h^{II}_i$ is summable. This is true if:
$$\sigma>\frac{2\delta^{-1}+1}{\zeta-1}.$$
To estimate $h^{I}_i$, we can apply condition (A3a) for $\sigma<\gamma$, and achieve:
\begin{equation}
\sum_{i=1}^{n}I_{i}(n)\leq \sum_{j=1}^{n}h^{I}_j\leq \sum_{j=1}^{n}j^{\sigma}\nu(B_j)^{1+\alpha}\\
\end{equation}
This sum is bounded by $E_n$ if $\nu(B_j)\leq j^{-\frac{\sigma}{\alpha}}$. It then follows that
$\theta_2(n)=\sum_{i=1}^{n} h^{I}_i\leq E_n.$
Hence we can apply Proposition \ref{prop:sprindzuk} with
$\theta(n)=\theta_1(n)+\theta_2(n)$, and obtain for arbitrary $\epsilon>0$:
\begin{equation*}
  S_n(x) =E_n  +
  O (E^{\frac{1}{2}}(\log E_n)^{\frac{3}{2}+\epsilon}
  ).
  \end{equation*}
This completes the proof for Case 2.

\begin{rmk}
In the proof of Proposition \ref{prop.bc.srt} it is possible to extend the range of $\sigma$ if we relax 
the requirement that $\theta_2(n)$ is bounded. However, we get a weaker asymptotic estimate in equation
\eqref{eq.srt.poly}
\end{rmk}

\section{Appendix: regularity of measures and Condition (A3) for Young towers} \label{sec.appendix}
In this section we prove Theorems \ref{thm.tower} and \ref{thm.tower2}. The proof of Theorem \ref{thm.tower} relies on geometrical properties of the invariant measure for rank one Young towers. The proof of Theorem \ref{thm.tower} requires
the verification of condition (A3b).

\subsubsection*{Proof of Theorem \ref{thm.tower}}
From the Young tower construction \cite{Young}, it follows that condition (A1) applies for $\Theta(n)=O(\theta^{n}_0)$,
and $\theta_0<1$. The following result gives an estimate on how $\nu$ scales on shrinking annuli. It is slightly weaker than condition (A2).
\begin{lemma}\label{lem.annulus.tower}
Suppose that $(f,\XX,\nu)$ satisfies the assumption of Theorem \ref{thm.tower}. There exists
$\hat\delta,\delta>0$ such that for all $x\in\XX$ and all $\epsilon<r^{\hat\delta}$:
\begin{equation}\label{eq.annulus.tower}
\nu(B(x,r+\epsilon))-\nu(B(x,r))\leq O(1)\epsilon^{\delta}.
\end{equation}
The constant $O(1)$ depends on $\tilde{x}$, but can be taken uniform for all $r<r_0(x)$, where $r_0$ depends
on $x$.
\end{lemma}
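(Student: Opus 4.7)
My plan is to exploit the hyperbolic product structure of the Young tower together with the exponential tail $\nu\{R > n\} = O(\theta_0^n)$. The SRB measure $\nu$ restricts to the reference set $\Lambda$ as a measure that disintegrates along unstable disks $\gamma^u \in \Gamma^u$; on each $\gamma^u$ the conditional measure is absolutely continuous with respect to arc-length with a Hölder density bounded above and below, while the transverse measure $\hat\nu$ on $\Gamma^u$ inherits Gibbs-like regularity through the symbolic coding by the partition defining the return map.

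First I would establish the annulus estimate for base points $x \in \Lambda$. Using the local product structure, a neighborhood of $x$ in $\Lambda$ is identified with $\gamma^u(x) \times T$, where $T$ parametrizes the unstable disks transversally. Because the tower is rank one, each $\gamma^u$ is a smooth curve, so the intersection $(B(x, r+\epsilon) \setminus B(x, r)) \cap \gamma^u$ has arc-length at most $C\epsilon$ by an elementary planar estimate on tubular neighborhoods of circles. By Fubini,
\[
\nu\bigl(B(x, r+\epsilon) \setminus B(x, r)\bigr)
\leq \int_{\gamma^u : \dist(\gamma^u, x) \leq r+\epsilon} C\epsilon \, d\hat\nu(\gamma^u)
\leq C'\,\epsilon\,\hat\nu\bigl(\{\gamma^u : \dist(\gamma^u, x) \leq r+\epsilon\}\bigr).
\]
The key ingredient is then the transverse bound $\hat\nu(\{\gamma^u : \dist(\gamma^u, x) \leq \rho\}) \leq C\rho^{\delta_s}$ for some $\delta_s > 0$ and all $\rho < r_0(x)$; this will follow from an upper bound on the local dimension of $\nu$ in the stable direction (available through Ledrappier--Young type estimates for hyperbolic SRB measures) together with Hölder continuity of the stable holonomy between neighboring stable disks. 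Choosing $\hat\delta$ so that the constraint $\epsilon < r^{\hat\delta}$ lets the factor $(r+\epsilon)^{\delta_s}$ be absorbed into the leading $\epsilon$, one obtains \eqref{eq.annulus.tower} on $\Lambda$ with some $\delta \in (0,1)$.

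To pass from $x \in \Lambda$ to general $x \in \XX$, I use the decomposition $\XX = \bigsqcup_{j \geq 0} f^j(\{R > j\} \cap \Lambda)$ induced by the tower. Writing $x = f^j(y)$ with $y \in \Lambda$ and $j$ the time since the last return, the map $f^{-j}$ is a local $C^2$ diffeomorphism near $x$ whose derivative is bounded by $e^{Cj}$ for a constant $C$ coming from the Lyapunov exponents. Consequently $f^{-j}$ sends $B(x, r+\epsilon) \setminus B(x, r)$ into a set that fits between two balls centered at $y$ whose inner radius and thickness are inflated by a factor of $e^{Cj}$; applying the base-case estimate at $y$ with suitably readjusted $\hat\delta$ and $\delta$ and using $f$-invariance of $\nu$ yields the bound at $x$, with the $O(1)$ constant inheriting the dependence on $j$ (hence on $x$) as the lemma allows.

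The main obstacle is the transverse Hölder bound on $\hat\nu$. Local dimension is known to exist $\nu$-a.e., but extracting a one-sided estimate $\hat\nu(\text{transverse ball}) \leq C\rho^{\delta_s}$ that is valid on all sufficiently small scales at a specific $x$ requires more care than merely invoking the a.e.\ value of $d_\nu$. The natural route is to cover a transverse neighborhood by symbolic cylinders of the Young tower partition and bound their $\hat\nu$-mass using the exponential mixing of the induced map together with bounded distortion along unstable disks; this produces Gibbs-type estimates from which the desired polynomial bound follows on all small scales, up to a constant that is finite but depends on $x$. A subsidiary subtlety is handling points $x$ whose last-return time $j$ is large: here the polynomial blow-up of the distortion is reconciled with the lemma's statement by the freedom to let $r_0(x)$ shrink with $j$, which is consistent with the $x$-dependence of the constants declared in the statement.
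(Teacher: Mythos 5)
Your strategy is to re-derive the annulus estimate from the product structure of the tower, whereas the paper simply quotes the ready-made annulus bound of Chazottes--Collet \cite[Proposition 4.2]{CC}, namely $\nu(B(x,r))-\nu(B(x,r-r^{\delta_1}))\leq O(1)\bigl(r^{\delta_1/2}r^{-c_1\mathfrak{p}}+\mathfrak{p}^2r^{-c_2+c_3\mathfrak{p}}\bigr)$ for $\nu$-a.e.\ $x$, and then tunes $\mathfrak{p}$ and $\delta_1$ so that the right-hand side becomes a positive power of $\epsilon=r^{\delta_1}$. Attempting the estimate from scratch is legitimate, but as written your sketch has genuine gaps. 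First, the claim that $(B(x,r+\epsilon)\setminus B(x,r))\cap\gamma^u$ has arc-length $O(\epsilon)$ is false: a bounded-curvature curve nearly tangent to the sphere $\partial B(x,r)$ meets the annulus in a set of length of order $\sqrt{r\epsilon}$ (and an arc of a circle centred at $x$ lies entirely inside the annulus). This tangency effect is exactly why the cited bound carries the exponent $\delta_1/2$, i.e.\ $\sqrt{\epsilon}$. The error is repairable, since any positive power of $\epsilon$ suffices for the lemma, but the ``elementary tubular neighbourhood estimate'' you invoke does not exist.

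Second, and more seriously, your reduction of a general $x\in\XX$ to the base $\Lambda$ misreads how $\nu$ is assembled from the tower: up to normalisation $\nu(A)=\sum_{j\ge0}\nu_0\bigl(f^{-j}A\cap\{R>j\}\bigr)$, so the $\nu$-measure of an annulus centred at \emph{any} point --- including a point of $\Lambda$ --- receives contributions from every tower level. You cannot assign to $x$ a single ``time since last return'' $j$ and conjugate once by $f^{-j}$; even your base case for $x\in\Lambda$ only controls the restriction of $\nu$ to $\Lambda$ on the annulus, not $\nu$ itself. The correct argument must sum over $j$ the measures of the distorted pull-backs of the annulus into $\Lambda$, balancing distortion growth in $j$ against the exponential tail $\nu\{R>j\}=O(\theta_0^j)$ --- which is precisely what is done in \cite{CC} and why the paper delegates to that reference. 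Third, the transverse bound $\hat\nu\bigl(\{\gamma^u:\dist(\gamma^u,x)\le\rho\}\bigr)\le C\rho^{\delta_s}$, valid at all small scales, is asserted rather than proved; it is the hard analytic content here (Ledrappier--Young theory gives an almost-everywhere dimension, not a one-sided bound at every scale below a threshold), and it too is supplied by the estimates in \cite{CC}. Finally, note that no argument of this type can deliver the conclusion literally ``for all $x\in\XX$'': both your route and the cited proposition yield it only for $\nu$-a.e.\ $x$, which is what the paper actually uses.
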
 
\begin{proof}
The following estimate is proved in \cite[Proposition 4.2]{CC}:
For $\nu$-a.e. $x\in\XX$, for all $\delta_1>0, \mathfrak{p}>0$ and $r<r_0(x)$:
\begin{equation}\label{eq.annulus.est1}
\nu(B(x,r))-\nu(B(x,r-r^{\delta_1}))\leq O(1)\left(r^{\delta_1/2}r^{-c_1\mathfrak{p}}+\mathfrak{p}^2r^{-c_2+c_3\mathfrak{p}}
\right),
\end{equation}
where $c_i$ are uniform constants.
Since $\delta_1$ is arbitrary, we set $\epsilon=r^{\delta_1}$.  The bounds can be expressed in terms of a positive 
exponent of $\epsilon$ if $\mathfrak{p}$ satisfies $\frac{\delta_1}{2c_1}>\mathfrak{p}>\frac{c_3}{c_2}.$ This will be
true for all $\delta_1$ sufficiently large.
Formally, we choose $\delta_0>0$ such that 
$\frac{\delta_1}{2c_1}>3\frac{c_3}{c_2},$
for all $\delta_1>\delta_0$. We then choose $\mathfrak{p}=2\frac{c_3}{c_2}$.
For this choice, a corresponding (uniform) $\delta>0$ exists, so that equation \ref{eq.annulus.tower} is valid
for all $\epsilon<r^{\delta_1}$.
\end{proof}
To prove Theorem \ref{thm.tower}, the only modification is in the proof of Proposition \ref{prop.bc2}. We must
instead use a finer sequence of Lipschitz approximation functions $\tilde{f}_k$ to each sequence $f_k$,
and apply Lemma \ref{lem.annulus.tower} to gain an estimate of $h^{II}_i$. For such a sequence, the asymptotics
of the series for $h^{II}_i$ are overall unaffected, in the sense that $\theta_2(n)$ remains summable (by exponential
decay of correlations). This completes the proof.

\subsubsection*{Proof of Theorem \ref{thm.tower2}}
The key estimate in proving Corollary \ref{thm.tower2} is the verification of (A3b). We can use the result of
Lemma \ref{lem.annulus.tower} to replace (A2) as already discussed. To prove Theorem \ref{thm.tower2}
it suffices to prove the following result:
\begin{lemma}
Suppose that $(f,\XX,\nu)$ satisfies the conditions of Theorem \ref{thm.tower2}. Then there exists $\alpha>0$
and $\gamma>1$ such that condition (A3b) for $\nu$-a.e. $x\in\XX$.
\end{lemma}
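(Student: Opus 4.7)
The plan is to verify (A3b) for $\nu$-a.e. $x\in\XX$ by combining exponential decay of correlations on the Young tower with an orbit-separation argument in the spirit of [CC]. Since (A1) holds with rate $\Theta(n)=O(\theta_0^n)$ and Lemma \ref{lem.annulus.tower} provides a substitute for (A2), the Lipschitz-approximation machinery used in the proof of Proposition \ref{prop.bc2} remains available. I would split the admissible range of $k$ into a \emph{very short} regime $1\leq k\leq k_0$ and a \emph{medium} regime $k_0<k\leq (\log n)^\gamma$, where $k_0=k_0(r)$ will be chosen of order $\log(1/r)$.

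In the medium regime I would construct a Lipschitz approximation $\tilde\rho_r$ of $1_{B(x,r)}$ of transition width $r^{\sigma}$, whose Lipschitz norm is polynomial in $1/r$ and whose $L^1$-error against $1_{B(x,r)}$ is controlled by Lemma \ref{lem.annulus.tower}. Then (A1) yields
\[
\nu(B(x,r)\cap f^{-k}B(x,r))\leq \left(\int \tilde\rho_r\,d\nu\right)^2+C\theta_0^{k}\|\tilde\rho_r\|_{\mathrm{Lip}}^{2}+(\text{annulus error}).
\]
Choosing $k_0=C_1\log(1/r)$ with $C_1>|\log\theta_0|^{-1}$ large enough forces the $\theta_0^{k}$ term to be bounded by $\nu(B(x,r))^{1+\alpha}$ for a positive $\alpha$ depending on the local dimension $d_\nu$ and the annulus exponent $\delta$. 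Because $\nu(B(x,r_n))=O(n^{-1+\epsilon_1})$ forces $r_n$ to decay polynomially in $n$, we have $k_0\leq C_2\log n$, and thus the medium regime already covers all $k\leq(\log n)^\gamma$ as soon as $\gamma>1$.

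For the very short regime, decay of correlations is too weak on its own, and I would instead appeal to a quantitative orbit-separation estimate as in [CC, Proposition 4.2]. Using a two-variable Lipschitz test function and decay of correlations, one bounds
\[
\nu\bigl\{x\in\XX:d(f^{k}x,x)\leq r\bigr\}\leq C\bigl(r^{d_\nu-\epsilon}+\theta_0^{k}\bigr),
\]
valid uniformly for $k$ moderately large. Summing over dyadic scales $r=2^{-\ell}$ and $k\leq k_0(2^{-\ell})$, a Borel–Cantelli argument removes a $\nu$-null set outside of which, for all small enough $r$, no orbit point $f^{k}x$ with $1\leq k\leq k_0$ enters $B(x,r)$; on this good set (A3b) is trivial in the very short regime since $B(x,r)\cap f^{-k}B(x,r)=\emptyset$.

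The main obstacle is matching parameters across the two regimes: the choice of $k_0=k_0(r)$ must be simultaneously large enough for exponential decay to beat the polynomial Lipschitz growth in the medium regime and small enough that the Borel–Cantelli sum in the very short regime converges. Lemma \ref{lem.annulus.tower} supplies the annulus control needed to pass from $1_{B(x,r)}$ to $\tilde\rho_r$ with negligible loss, and the input hypothesis $\nu(B(x,r_n))=O(n^{-1+\epsilon_1})$ gives the slack needed to absorb these errors into $\nu(B(x,r))^{1+\alpha}$. With these choices any $\gamma>1$ is admissible and one obtains an explicit $\alpha>0$ in terms of $d_\nu$, $\delta$ and $|\log\theta_0|$, completing the verification of (A3b) for $\nu$-a.e. $x$ and hence the proof of Theorem \ref{thm.tower2}.
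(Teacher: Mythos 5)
Your decomposition into a ``medium'' regime $k\gtrsim\log(1/r)$ (handled by Lipschitz approximation plus exponential decay of correlations) and a ``very short'' regime $1\le k\le k_0(r)\sim\log(1/r)$ correctly isolates where the difficulty sits, and the medium-regime argument is sound. The gap is in the very short regime, and it is a genuine one. First, the estimate you invoke, $\nu\{x:d(f^kx,x)\le r\}\le C(r^{d_\nu-\epsilon}+\theta_0^{k})$, is itself a decay-of-correlations bound and is therefore only nontrivial when $\theta_0^{k}$ is smaller than a positive power of $r$, i.e.\ when $k\gtrsim\log(1/r)$ --- exactly the regime you have already covered. For $k$ bounded or growing slower than $\log(1/r)$ the term $\theta_0^{k}$ is of order one, and your Borel--Cantelli sum $\sum_{\ell}\sum_{k\le k_0(2^{-\ell})}\theta_0^{k}\ge\sum_{\ell}\theta_0$ diverges. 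Second, even granting orbit separation $f^kx\notin B(x,r)$, this does not yield $B(x,r)\cap f^{-k}B(x,r)=\emptyset$: for that you need $d(f^kx,x)>r+\operatorname{diam}f^k(B(x,r))$, and over $k$ up to $C_1\log(1/r)$ the diameter of $f^k(B(x,r))$ can be of order $r^{1-C_1\log\lambda}$, i.e.\ a negative power of $r$; for a non-uniformly hyperbolic diffeomorphism such as H\'enon there is no uniform control of $\|Df^k\|$ on small balls in any case. So the short returns cannot be dispatched by decay of correlations plus a pointwise recurrence bound.

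The paper does not attempt to prove the short-return estimate from (A1): it imports it wholesale from \cite[Proposition 4.1]{CC}, which gives $\sum_{\ell=0}^{|\log r|^{\gamma_1}}\nu\bigl(B_r(x)\cap T^{-\ell}B_r(x)\bigr)\le O(1)\,r^{\alpha_1}\nu(B_r(x))$ off an exceptional set of measure $O(r^{s_1})$, an estimate whose proof uses the tower structure rather than correlation decay. The actual content of the paper's argument is then the bookkeeping you gloss over: the exceptional set depends on $r$, so one runs Borel--Cantelli along a polynomial sequence of scales $a_n=n^{-a}$, uses the doubling-set bound $\nu\{x:\nu(B_{2r}(x))>r^{-s}\nu(B_r(x))\}\le Cr^{s}$ from \cite[Lemma A.2]{CC} to interpolate to all $r\in[a_{n+1},a_n]$, and finally uses the local dimension at $\tilde{x}$ to convert $r^{\alpha_1-s}\nu(B_r(x))$ into $\nu(B_r(x))^{1+\alpha}$. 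To repair your proof you would need either to cite such a short-return input directly or to supply an argument exploiting hyperbolicity/the tower (not just (A1)) showing that returns in time $k\le C_1\log(1/r)$ contribute at most $\nu(B_r)^{1+\alpha}$; as written, the very short regime is unproved.
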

\begin{proof}
From \cite[Proposition 4.1]{CC}, the following estimate
is achieved: there exist $s_1>0$ and $\alpha_1>0$ such that for all $\gamma_1>1$ the 
following estimate holds on a set $\tilde{\XX}\subset\XX$ with 
$\nu(\XX\setminus\tilde\XX)\leq Cr^{s_1},$ 
\begin{equation}
\sum_{\ell=0}^{|\log r|^{\gamma_1} }\nu(B_{r}(x)\cap T^{-\ell}(B_{r}(x)))\leq
O(1)r^{\alpha_1}\nu(B_r(x)).
\end{equation}
Using this estimate, we can verify condition (A3b). Let 
$\mathcal{M}_r=\XX\setminus\tilde{\XX}$, be such that $\nu(\mathcal{M})\leq Cr^{s_1}$.
Consider first the set $\mathcal{E}_{r,s}$ defined by:
$$\mathcal{E}_{r,s}=\{x:\,\nu(B_{2r}(x))>r^{-s}\nu(B_r(x))\}.$$ 
From \cite[Lemma A.2]{CC}, it can be shown by set covering arguments that for all $r,s>0$
$\nu(\mathcal{E}_{r,s})\leq Cr^{s},$ with $C>0$ uniform.

Consider the sequence $a_n=\frac{1}{n^a}$ for some $a>1$, and let $r\in[a_{n+1},a_n]$. Given $s_1 $ as above we fix the constant $a$
so that $\sum_{n}a^{s}_n<\infty$ for $s<s_1$. Hence going along this sequence, it follows
by the first Borel Cantelli Lemma, that there exists $N_{x}>0$, such that for all $n\geq N$, and $\nu$-a.e. $x\in X$,
we have that $x\not\in\mathcal{M}_{a_n}\cup\mathcal{E}_{a_n,s}$. Hence for $r=a_n$, $n\geq N$, and 
$p<(\log(1/r))^{\gamma_1}$ we have that
\begin{equation}
\nu(B_{r}(x)\cap T^{-\ell}(B_{r}(x)))\leq 
O(1)\nu(B_r(x))r^{\alpha_1}.
\end{equation}
For this value of $x$, we now extend this inequality for all $r\in[a_{n+1},a_n]$ and $n\geq N$. 
Since $\lim_{n\to\infty}a_{n+1}/a_n=1$, it follows that there exists $r_0(x)$ such that for all $r<r_0$ we have
\begin{equation}\label{eq.condC}
\begin{split}
\nu(B_{r}(x)\cap T^{-\ell}(B_{r}(x))) &\leq O(1)\nu(B_{a_n}(x))a^{\alpha_1}_{n}\\
&\leq O(1)(a_{n}/2)^{-s}\nu(B_{a_{n}/2}(x))(2r)^{\alpha_1}\\
&\leq O(1)r^{\alpha_1-s}\nu(B_r(x)).
\end{split}
\end{equation}
By local dimension estimates, we also have for $\nu$-a.e. $x\in X$, the existence of $\epsilon>0$ such that
for all $r<r_{\epsilon}$:
$$r^{d_{\nu}+\epsilon}\leq\nu(B_r(x))\leq r^{d_{\nu}-\epsilon}.$$
Hence if we choose $s$ small enough in equation \eqref{eq.condC} we find that there exists $\alpha>0$ and $\gamma>1$ such
that for all $r<r_0$:
\begin{equation}
\nu(B_{r}(x)\cap T^{-\ell}(B_{r}(x)))\leq 
\nu(B_r(x))^{1+\alpha},
\end{equation}
with $\ell=1,\ldots |\log r|^{\gamma}$. Since $r$ is arbitrary condition (A3b) holds along any sequence $r_n\to 0$.
\end{proof}

\end{document}